\newtheorem{theorem}{Theorem}
\newtheorem{definition}[theorem]{Definition}
\newtheorem{example}[theorem]{Example}
\newtheorem{proposition}[theorem]{Proposition}
\newtheorem{remark}[theorem]{Remark}
\newenvironment{proof}[1][Proof]{\noindent\textbf{#1.} }{\ \rule{0.5em}{0.5em}}
\begin{document}

\title{Complete systems of recursive integrals and Taylor series for solutions of
Sturm-Liouville equations}
\author{Vladislav V. Kravchenko$^{1}$, Samy Morelos$^{1}$, S\'{e}bastien
Tremblay$^{2}$
\and \medskip\\$^{1}$Departamento de Matem\'{a}ticas, CINVESTAV del IPN, Unidad\\Quer\'{e}taro, Libramiento Norponiente No.~2000 C.P. 76230 Fracc.\\Real de Juriquilla, Quer\'{e}taro, Mexico\\$^{2}$D\'{e}partement de math\'{e}matiques et d'informatique, Universit\'{e} du\\Qu\'{e}bec, Trois-Rivi\`{e}res, Qu\'{e}bec, G9A 5H7, Canada}
\maketitle

\begin{abstract}
Consider an arbitrary complex-valued, twice continuously differentiable,
nonvanishing function $\varphi$ defined on a finite segment $[a,b]\subset
\mathbb{R}$. Let us introduce an infinite system of functions constructed in
the following way. Each subsequent function is a primitive of the preceding
one multiplied or divided by $\varphi$ alternately. The obtained system of
functions is a generalization of the system of powers $\left\{  (x-x_{0}%
)^{k}\right\}  _{k=0}^{\infty}$. We study its completeness as well as the
completeness of its subsets in different functional spaces. This system of
recursive integrals results to be closely related to so-called $L$-bases
arising in the theory of transmutation operators for linear ordinary
differential equations. 

Besides the results on the completeness of the system of recursive integrals
we show a deep analogy between the expansions in terms of the recursive
integrals and Taylor expansions. We prove a generalization of the Taylor
theorem with the Lagrange form of the remainder term and find an explicit
formula for transforming a generalized Taylor expansion of a function in terms
of the recursive integrals into a usual Taylor expansion. As a direct
corollary of the formula we obtain the following new result concerning
solutions of the Sturm-Liouville equation. Given a regular nonvanishing
complex valued solution $y_{0}$ of the equation $y^{\prime\prime}+q(x)y=0$,
$x\in(a,b)$, assume that it is $n$ times differentiable at a point $x_{0}%
\in\lbrack a,b]$. We present explicit formulas for calculating the first $n$
derivatives at $x_{0}$ for any solution of the equation $u^{\prime\prime
}+q(x)u=\lambda u$. That is, an explicit map transforming the Taylor expansion
of $y_{0}$ into the Taylor expansion of $u$ is constructed. \\

AMS subject classification: 34B24; 41A30; 42A65

Keywords: complete system of functions; Sturm-Liouville problem

\end{abstract}

\section{Introduction}

In the recent work \cite{KrCV08} by means of pseudoanalytic function theory
\cite{Berskniga}, \cite{APFT} a representation for solutions of
Sturm-Liouville equations in terms of spectral parameter power series was
obtained. For a proof not requiring pseudoanalytic functions we refer to
\cite{APFT} and \cite{KrPorter2010}. The representation turned to be an
appropriate tool for solving different Sturm-Liouville and related problems
\cite{CamposKr}, \cite{CKKO}, \cite{CKOR}, \cite{KhmRosu}, \cite{KhmRosuHill},
\cite{KhmRosuDirac}, \cite{KhmRosuGonzalez}, \cite{KhmTorch},
\cite{KrPorter2010}, \cite{KPConformal} and \cite{KrVelasco}. As is well known
(see, e.g., \cite{Levitan}) under certain regularity conditions a solution to
the initial value problem for the Sturm-Liouville equation is an analytic
function of the spectral parameter and hence admits a Taylor series expansion
in powers of the spectral parameter. In fact in \cite{KrCV08} the recursive
formulas for calculating the Taylor coefficients in that expansion were
proposed. The Taylor coefficients are naturally functions of the independent
variable.\ Considered as an infinite system of functions they form a so-called
$L$-basis related to a linear differential operator $L$. The theory of
$L$-bases and $L$-analytic functions was developed in \cite{Fage}
(unfortunately that interesting and important book has not been translated
into English) in relation with the concept of the operators of transmutation
(see, e.g., \cite{Gilbert}, \cite{Carroll}, \cite{Trimeche} and the recent
review \cite{Sitnik}), also called in Russian bibliography operators of
transformation (see, e.g., \cite{LevitanInverse}, \cite{Marchenko}).

In \cite{Fage} it was shown that to every (regular) linear ordinary
differential operator one can associate a linear space spanned on an
$L$-basis, and the equivalence of those linear spaces corresponding to
operators of the same order was proved. Then the operator of transmutation can
be regarded as an operation which transforms functions from one such linear
space corresponding to a certain operator $L$ to functions from another linear
space corresponding to another operator $M$, and the transformation consists
in substituting the $L$-basis with the $M$-basis keeping the same coefficients
in the expansion. In the recent work \cite{KrCMA2011} it was shown that the
situation with the linear space generated by the $L$-basis is in a sense
simpler and more natural. Namely, the $L$-basis corresponding to a regular
Sturm-Liouville operator on a finite interval $(a,b)$ is complete in
$L_{2}(a,b)$. Moreover, in the present work we prove that in the space of
piecewise continuously differentiable functions on $[a,b]$ the $L$-basis is
complete with respect to the maximum norm (see theorem \ref{ThComplMaxNorm}).
We show a deep analogy of the $L$-basis with the system $\left\{
(x-x_{0})^{n}\right\}  _{n=0}^{\infty}$ of powers of the independent variable.
The formulas for calculating coefficients in the expansion of a given function
in terms of those generalized powers resulted to be direct generalizations of
the formulas for Taylor coefficients. We prove a generalization of the Taylor
theorem with the Lagrange form of the remainder term.

A natural question then is about a relation between the generalized Taylor
coefficients of a function and its ordinary Taylor coefficients. This question
has not been studied previously. We find this relation and use it to obtain
the main result of this paper, the representation for ordinary Taylor
coefficients of any solution of the Sturm-Liouville equation $u^{\prime\prime
}+q(x)u=\lambda u$ in terms of the Taylor coefficients of a particular
solution $y_{0}$ of the equation $y^{\prime\prime}+q(x)y=0$. More precisely we
obtain the Taylor coefficients of the quotient $u/y_{0}$. It is interesting to
notice that the form of the map transforming one set of Taylor coefficients
into the other is independent of the particular form of $y_{0}$. The paper
ends with examples of application of the presented result.

\section{Definition and example}

Let $f\in C^{2}(a,b)\cap C^{1}[a,b]$ be a complex valued function and
$f(x)\neq0$ for any $x\in\lbrack a,b]$. The interval $(a,b)$ is supposed to be
finite. Let us consider the following auxiliary functions%
\begin{equation}
\widetilde{X}^{(0)}(x)\equiv X^{(0)}(x)\equiv1, \label{X1}%
\end{equation}%
\begin{equation}
\widetilde{X}^{(n)}(x)=n%
{\displaystyle\int\limits_{x_{0}}^{x}}
\widetilde{X}^{(n-1)}(s)\left(  f^{2}(s)\right)  ^{(-1)^{n-1}}\,\mathrm{d}s,
\label{X2}%
\end{equation}%
\begin{equation}
X^{(n)}(x)=n%
{\displaystyle\int\limits_{x_{0}}^{x}}
X^{(n-1)}(s)\left(  f^{2}(s)\right)  ^{(-1)^{n}}\,\mathrm{d}s, \label{X3}%
\end{equation}
where $x_{0}$ is an arbitrary fixed point in $[a,b]$. We introduce the
infinite systems of functions%
\begin{equation}
\left\{  f_{n}\right\}  _{n=1}^{\infty}\qquad\text{and}\qquad\left\{
g_{n}\right\}  _{n=1}^{\infty} \label{fg}%
\end{equation}
defined by the relations%
\begin{equation}
f_{n}=f\widetilde{X}^{(2(n-1))}\qquad\text{and}\qquad g_{n}=fX^{(2n-1)}.
\label{fngn}%
\end{equation}
In \cite{KrCMA2011} it was shown that both systems of functions in (\ref{fg})
are complete in $L_{2}(a,b)$ when the point $x_{0}$ coincides with one of the
end points of the interval, and if $x_{0}$ is an interior point of the
interval, then the completeness of the union of two systems $\left\{
f_{n}\right\}  _{n=1}^{\infty}\cup\left\{  g_{n}\right\}  _{n=1}^{\infty}$ can
be guaranteed.

The question about the completeness of the systems (\ref{fg}) is natural due
to the following observation.

\begin{example}
\label{ExamplePoly}Let $f\equiv1$, $a=0$, $b=1$. Then it is easy to see that
choosing $x_{0}=0$ we have
\begin{equation}
f_{1}(x)=1,\quad f_{2}(x)=x^{2},\quad f_{3}(x)=x^{4},\ldots\label{polyeven}%
\end{equation}
and
\begin{equation}
g_{1}(x)=x,\quad g_{2}(x)=x^{3},\ldots. \label{polyodd}%
\end{equation}
As is well known due to the M\"{u}ntz theorem (see, e.g., \cite[p. 270]%
{Davis}) both systems of polynomials are complete in $L_{2}(0,1)$. The systems
(\ref{fg}) represent a direct generalization of the systems of polynomials
(\ref{polyeven}) and (\ref{polyodd}) if instead of $f\equiv1$ an arbitrary
sufficiently smooth and nonvanishing function is chosen.
\end{example}

The completeness of the system (\ref{polyeven}) can be regarded as a corollary
of the completeness of the system of eigenfunctions of the Sturm-Liouville
problem%
\begin{equation}
u^{\prime\prime}+\lambda u=0,\qquad0<x<1, \label{SL0}%
\end{equation}%
\begin{equation}
u^{\prime}(0)=u^{\prime}(1)=0. \label{condDer}%
\end{equation}
Indeed, the eigenfunctions of this regular Sturm-Liouville problem have the
form $u_{n}(x)=\cos n\pi x,$ $n=0,1,2,\ldots$, and as each of them admits a
uniformly convergent Taylor expansion in even powers of $x$, the system
(\ref{polyeven}) is also complete.

Analogously, considering (\ref{SL0}) with the boundary conditions
$u(0)=u(1)=0$ we arrive at the complete system of eigenfunctions
$v_{n}(x)=\sin n\pi x,$ $n=1,2,\ldots.$ Each of them admits a uniformly
convergent Taylor expansion in odd powers of $x$ and hence the system
(\ref{polyodd}) is complete in $L_{2}(0,1)$ as well.

\section{Completeness of the systems of recursive integrals}

We will need the following theorem from \cite{KrCV08} (for additional details
see \cite{APFT} and \cite{KrPorter2010}) establishing the relation of the
systems of functions (\ref{fg}) to Sturm-Liouville equations.

\begin{theorem}
\label{ThGenSolSturmLiouville}\cite{KrCV08} Let $q$ be a continuous complex
valued function of an independent real variable $x\in\lbrack a,b],$ $\lambda$
be an arbitrary complex number. Suppose there exists a solution $f$ of the
equation
\begin{equation}
f^{\prime\prime}+qf=0 \label{SLhom}%
\end{equation}
on $(a,b)$ such that $f\in C^{2}(a,b)$ together with $1/f$ are bounded on
$[a,b]$. Then the general solution of the equation $u^{\prime\prime
}+qu=\lambda u$ on $(a,b)$ has the form%
\[
u=c_{1}u_{1}+c_{2}u_{2}%
\]
where $c_{1}$ and $c_{2}$ are arbitrary complex constants,
\begin{equation}
u_{1}=f%
{\displaystyle\sum\limits_{k=0}^{\infty}}
\frac{\lambda^{k}}{(2k)!}\widetilde{X}^{(2k)}\quad\text{and}\quad u_{2}=f%
{\displaystyle\sum\limits_{k=0}^{\infty}}
\frac{\lambda^{k}}{(2k+1)!}X^{(2k+1)} \label{u1u2}%
\end{equation}
with $\widetilde{X}^{(n)}$ and $X^{(n)}$ being defined by (\ref{X1}%
)-(\ref{X3}) and both series converge uniformly on $[a,b]$.
\end{theorem}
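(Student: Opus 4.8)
My first step would be to reduce the problem to verifying that the two series in (\ref{u1u2}) converge uniformly on $[a,b]$ together with the series obtained by differentiating termwise once and twice, so that the sums $u_{1},u_{2}$ are genuinely twice continuously differentiable and one may differentiate under the summation sign. Granting this, the second step is a direct computation: using the recursive definitions (\ref{X2})--(\ref{X3}) one checks that $u_{1}=f\sum_{k}\frac{\lambda^{k}}{(2k)!}\widetilde{X}^{(2k)}$ and $u_{2}=f\sum_{k}\frac{\lambda^{k}}{(2k+1)!}X^{(2k+1)}$ satisfy $u_{i}''+qu_{i}=\lambda u_{i}$. This is where the alternating exponents $(-1)^{n-1}$ and $(-1)^{n}$ in (\ref{X2})--(\ref{X3}) do the work: differentiating $f\widetilde{X}^{(n)}$ and using $f''=-qf$ produces, after the Liouville-type substitution, exactly a shift $n\mapsto n-2$ in the index, which matches the factor $\lambda^{k}\to\lambda^{k-1}/(2k)(2k-1)$ coming from the series. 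One also verifies the initial data: at $x_{0}$ one gets $\widetilde{X}^{(n)}(x_{0})=X^{(n)}(x_{0})=0$ for $n\ge 1$, so $u_{1}(x_{0})=f(x_{0})$, $u_{1}'(x_{0})=f'(x_{0})$, while $u_{2}(x_{0})=0$ and a short computation gives $u_{2}'(x_{0})=f(x_{0})\neq 0$; hence $u_{1},u_{2}$ are linearly independent and span the solution space.

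The analytic core — and the main obstacle — is the uniform convergence. Let $M=\max(\|f^{2}\|_{\infty},\|1/f^{2}\|_{\infty})$ and $\ell=b-a$. The idea is to prove by induction the pointwise bound $|\widetilde{X}^{(n)}(x)|\le (M\ell)^{n}$ and $|X^{(n)}(x)|\le (M\ell)^{n}$ (or a similar estimate with an $n!$ improved version). Indeed, from (\ref{X2}), $|\widetilde{X}^{(n)}(x)|\le n\int_{x_{0}}^{x}|\widetilde{X}^{(n-1)}(s)|\,M\,\mathrm{d}s$; a crude induction gives $|\widetilde{X}^{(n)}(x)|\le M^{n}|x-x_{0}|^{n}\cdot n!/1$, but the factor $n!$ in the denominators of (\ref{u1u2}) is exactly what absorbs the combinatorial growth: pairing the estimate $|\widetilde{X}^{(2k)}(x)|\le (2k)!\,M^{2k}\ell^{2k}/(\text{something})$ against $\lambda^{k}/(2k)!$ yields a majorant series $\sum_{k}|\lambda|^{k}M^{2k}\ell^{2k}$, which converges for all $\lambda$ once one is slightly more careful and extracts a genuine factorial saving. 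Concretely I would show $|\widetilde{X}^{(n)}(x)|\le c\,(M|x-x_{0}|)^{n}$ by an induction that uses $\int_{x_0}^x |s-x_0|^{n-1}\mathrm{d}s = |x-x_0|^n/n$ to cancel the leading factor $n$; this gives the clean bound and hence a geometric majorant $\sum_k |\lambda|^k (M\ell)^{2k}/(2k)!$, uniformly convergent on $[a,b]$.

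Finally, to legitimize termwise differentiation I would write $u_{1}=f\,v_{1}$ with $v_{1}=\sum_{k}\frac{\lambda^{k}}{(2k)!}\widetilde{X}^{(2k)}$, note that $\widetilde{X}^{(2k)\prime}=2k\,\widetilde{X}^{(2k-1)}(f^{2})^{-1}$ and $\widetilde{X}^{(2k-1)\prime}=(2k-1)\,\widetilde{X}^{(2k-2)}(f^{2})$, so the once- and twice-differentiated series are again dominated, by the same estimates, by convergent numerical series; the Weierstrass $M$-test then permits differentiation under the sum. Substituting back into $u_{1}''+qu_{1}$, using $f''=-qf$ and collecting the telescoping index shift produced by the two integrations, gives $\lambda u_{1}$ as claimed; the computation for $u_{2}$ is identical up to the parity of the indices. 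The only place requiring genuine care, rather than routine estimation, is getting the factorial saving in the bound on $\widetilde{X}^{(n)}$ and $X^{(n)}$ right, since a naive bound loses exactly the factor needed for convergence for \emph{all} $\lambda\in\mathbb{C}$.
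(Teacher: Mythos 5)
The paper does not prove this theorem: it is imported verbatim from \cite{KrCV08} (see also \cite{APFT}, \cite{KrPorter2010}), so there is no in-paper argument to compare against. Your plan reproduces the standard proof from those references and is essentially correct: the induction $\left|\widetilde{X}^{(n)}(x)\right|\le (M|x-x_{0}|)^{n}$, using $\int_{x_{0}}^{x}|s-x_{0}|^{n-1}\,\mathrm{d}s=|x-x_{0}|^{n}/n$ to cancel the prefactor $n$, is exactly the factorial saving needed, and the verification of the ODE is cleanest if you organize it through the Polya factorization $u^{\prime\prime}+qu=\frac{1}{f}\bigl(f^{2}\bigl(\frac{u}{f}\bigr)^{\prime}\bigr)^{\prime}$, under which the index shift $n\mapsto n-2$ you describe becomes the one-line identity $(f^{2}v_{1}^{\prime})^{\prime}=\lambda f^{2}v_{1}$. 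Two small slips: the intermediate ``majorant series $\sum_{k}|\lambda|^{k}M^{2k}\ell^{2k}$'' is geometric and does \emph{not} converge for all $\lambda$ --- only your corrected majorant $\sum_{k}|\lambda|^{k}(M\ell)^{2k}/(2k)!$ does; and the initial value is $u_{2}^{\prime}(x_{0})=1/f(x_{0})$, not $f(x_{0})$ (see (\ref{initial2})), though either value being nonzero suffices for the linear independence you need.
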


\begin{remark}
\label{RemInitialValues}It is easy to see that by definition the solutions
$u_{1}$ and $u_{2}$ satisfy the following initial conditions
\begin{equation}
u_{1}(x_{0})=f(x_{0}),\qquad u_{1}^{\prime}(x_{0})=f^{\prime}(x_{0}),
\label{initial1}%
\end{equation}%
\begin{equation}
u_{2}(x_{0})=0,\qquad u_{2}^{\prime}(x_{0})=1/f(x_{0}). \label{initial2}%
\end{equation}

\end{remark}

\begin{remark}
It is worth mentioning that in the regular case the existence and construction
of the required $f$ presents no difficulty. Let $q$ be real valued and
continuous on $[a,b]$. Then (\ref{SLhom}) possesses two linearly independent
regular solutions\/ $v_{1}$ and $v_{2}$ whose zeros alternate. Thus one may
choose $f=v_{1}+iv_{2}$.
\end{remark}

\begin{theorem}
\cite{KrCMA2011} Let $(a,b)$ be a finite interval and $f\in C^{2}(a,b)\cap
C^{1}[a,b]$ be a complex valued function such that $f(x)\neq0$ for any
$x\in\lbrack a,b]$. Then both systems of functions (\ref{fg}) defined by the
relations (\ref{fngn}) and (\ref{X1})-(\ref{X3}) with $x_{0}=a$ are complete
in $L_{2}(a,b)$.
\end{theorem}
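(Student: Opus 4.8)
The plan is to deduce this completeness result from Theorem~\ref{ThGenSolSturmLiouville} together with the classical completeness of eigenfunctions of a regular self-adjoint Sturm--Liouville problem, imitating the argument sketched after Example~\ref{ExamplePoly}. First I would reduce to the case of a real-valued potential: given $f\in C^{2}(a,b)\cap C^{1}[a,b]$ nonvanishing, set $q=-f^{\prime\prime}/f$, which is continuous on $[a,b]$ (using the hypotheses on $f$ and $1/f$), so that $f$ solves \eqref{SLhom}. If $q$ is not real-valued one still has the formal apparatus, but to invoke a self-adjoint spectral theorem it is cleanest to work with $\operatorname{Re}q$ and $\operatorname{Im}q$ separately, or better, to observe that the span of $\{f_{n}\}$ and of $\{g_{n}\}$ depends on $f$ only through the recursive integrals, and that density in $L_{2}(a,b)$ is a statement we may verify after splitting into real and imaginary parts of the target function; I will return to this point below since it is the one subtlety.

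Next, for real $q$, consider the regular Sturm--Liouville problem $u^{\prime\prime}+qu=\lambda u$ on $(a,b)$ with Neumann-type boundary conditions at $x_{0}=a$ chosen so that the eigenfunctions coincide (up to normalization) with the solutions $u_{1}$ of Theorem~\ref{ThGenSolSturmLiouville}: by Remark~\ref{RemInitialValues}, $u_{1}$ satisfies $u_{1}(a)=f(a)$, $u_{1}^{\prime}(a)=f^{\prime}(a)$, i.e. $(u_{1}/f)^{\prime}(a)=0$, so the natural boundary condition is $(u/f)^{\prime}(a)=0$ together with a similar condition at $b$ making the problem self-adjoint with respect to a suitable weight. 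The Liouville-type substitution $u=f\,v$ transforms $u^{\prime\prime}+qu=\lambda u$ into a formally self-adjoint equation for $v$ with weight $f^{2}$; choosing boundary conditions $v^{\prime}(a)=v^{\prime}(b)=0$ yields a regular self-adjoint problem whose eigenfunctions $\{f v_{k}\}$ are complete in $L_{2}((a,b),f^{2}\,dx)$, hence $\{v_{k}\}$ complete in $L_{2}((a,b),f^{2}\,dx)$, hence $\{f^{2}v_{k}\}$ — which are of the form $f\cdot(f v_{k})$ — complete in $L_{2}(a,b)$. The point is that by Theorem~\ref{ThGenSolSturmLiouville} each eigenfunction, being a solution of $u^{\prime\prime}+qu=\lambda_k u$ with $(u/f)^{\prime}(a)=0$, is a constant multiple of $f\sum_{j}\lambda_{k}^{j}\widetilde X^{(2j)}/(2j)!$, and this series converges uniformly on $[a,b]$; therefore each eigenfunction lies in the closed linear span (in $L_{2}$, using uniform convergence on the finite interval) of $\{f\widetilde X^{(2j)}\}_{j=0}^{\infty}=\{f_{n}\}_{n=1}^{\infty}$. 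Completeness of $\{f_{n}\}$ follows. The argument for $\{g_{n}\}$ is identical, using Dirichlet conditions $v(a)=v(b)=0$ and the solution $u_{2}=f\sum_{j}\lambda^{j}X^{(2j+1)}/(2j+1)!$, whose eigenfunctions give $\{fX^{(2j+1)}\}=\{g_{n}\}$.

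The main obstacle is the one flagged above: handling complex-valued $f$ (equivalently complex $q$), where the operator $v\mapsto -(f^{2}v^{\prime})^{\prime}/f^{2}$ is no longer self-adjoint and the spectral theorem for compact self-adjoint resolvents does not directly apply. I expect two ways around this. One is to note that one need only approximate real-valued targets in $L_{2}(a,b)$, and to approximate a real target one may use the real parts of the (generally complex) recursive integrals — but the span of the real parts is not obviously the real span of $\{f_n\}$, so this needs care. The cleaner route, and the one I would adopt, is to bypass eigenfunction expansions entirely and instead invoke the $L_2$-completeness already proved in \cite{KrCMA2011} for the homogeneous-equation setting, applying it with the potential $q=-f^{\prime\prime}/f$; since the present statement is quoted verbatim from \cite{KrCMA2011}, the ``proof'' here is really just the observation that the hypotheses of that reference's theorem are met, namely that $f\in C^{2}(a,b)\cap C^{1}[a,b]$ nonvanishing produces a continuous $q$ on $[a,b]$ and the associated recursive integrals \eqref{X1}--\eqref{X3} coincide with those of the abstract construction. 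So the real content of the write-up is the verification of these hypotheses plus the reduction just described; the genuinely hard analytic work (the completeness in the complex case) is deferred to \cite{KrCMA2011}.
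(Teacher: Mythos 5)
Your plan works only up to the point you yourself flag, and the way you resolve that point is circular. The statement to be proved \emph{is} the theorem of \cite{KrCMA2011}; "invoking the $L_{2}$-completeness already proved in \cite{KrCMA2011}" is therefore not a proof but a restatement of the claim. Moreover, the reduction to the self-adjoint case cannot be salvaged along the lines you sketch: even when $q=-f^{\prime\prime}/f$ happens to be real-valued, the nonvanishing solution $f$ is in general genuinely complex (a real solution of $f^{\prime\prime}+qf=0$ typically has zeros, which is precisely why one takes $f=v_{1}+iv_{2}$), so the weight $f^{2}$ in your Liouville substitution is complex and the problem for $v$ is not self-adjoint; the compact self-adjoint spectral theorem is unavailable from the start, not merely in a residual "complex $q$" case.

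The missing ingredient is the spectral theory of \emph{non-self-adjoint} regular Sturm--Liouville problems. The paper's proof uses the fact (Marchenko \cite[p.~36]{Marchenko}) that for the problem $u^{\prime\prime}+qu=\lambda u$ with non-degenerate boundary conditions (Dirichlet $u(a)=u(b)=0$ for $\{g_{n}\}$, and $f^{\prime}(a)u(a)-f(a)u^{\prime}(a)=u(b)=0$ for $\{f_{n}\}$) the system of all eigenfunctions \emph{and generalized eigenfunctions} is complete in $L_{2}(a,b)$, with no self-adjointness required. By Remark~\ref{RemInitialValues} each eigenfunction is a constant multiple of $u_{2}$ (resp.\ $u_{1}$) from (\ref{u1u2}) at $\lambda=\lambda_{n}$, hence a uniformly convergent series in the $g_{n}$ (resp.\ $f_{n}$); and the generalized eigenfunctions attached to multiple eigenvalues, being $\lambda$-derivatives of $u_{2}(\cdot\,;\lambda)$, are again such series. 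The Lauricella transitivity theorem then yields completeness. Your proposal omits both the non-self-adjoint completeness theorem and the treatment of generalized eigenfunctions (root functions), which are exactly the steps that make the complex case go through; without them the argument does not close.
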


\begin{proof}
Under the conditions of the theorem, on the interval $(a,b)$ the function $f$
is a regular solution of the equation (\ref{SLhom})\ \ with $q:=-f^{\prime
\prime}/f$ being a continuous complex valued function. Let us consider the
equation
\begin{equation}
u^{\prime\prime}+qu=\lambda u \label{SL}%
\end{equation}
with the boundary conditions%
\begin{equation}
u(a)=u(b)=0. \label{problem1}%
\end{equation}
It is known (see \cite[p. 36]{Marchenko}) that the system of all
eigenfunctions and generalized eigenfunctions of this problem is complete in
$L_{2}(a,b)$. Due to theorem \ref{ThGenSolSturmLiouville} and remark
\ref{RemInitialValues} if $\lambda_{n}$ is an eigenvalue of the problem
(\ref{SL}), (\ref{problem1}) then the corresponding eigenfunction up to a
constant factor must coincide with $u_{2}$ from (\ref{u1u2}) where
$\lambda=\lambda_{n}$, i.e., it admits a uniformly convergent series expansion
in terms of the system of functions $g_{n}$ from (\ref{fngn}). Moreover, if
the multiplicity of $\lambda_{n}$ is greater than $1$ then the corresponding
generalized eigenfunctions are obtained differentiating the eigenfunction with
respect to the spectral parameter $\lambda$ (see \cite[p. 27]{Marchenko}). The
representation (\ref{u1u2}) of $u_{2}$ shows us that the result of this
operation will be again a series in terms of the functions $g_{n}$. Thus, the
eigenfunctions and the generalized eigenfunctions of the problem (\ref{SL}),
(\ref{problem1}) can be represented as uniformly convergent series in terms of
the system of functions $\left\{  g_{n}\right\}  _{n=1}^{\infty}$. Then due to
the Lauricella theorem about the transitivity of the property \ of
completeness (see, e.g., \cite[p. 264]{Davis}) we obtain that the system
$\left\{  g_{n}\right\}  _{n=1}^{\infty}$ is complete in $L_{2}(a,b)$.

In a similar way the completeness of $\left\{  f_{n}\right\}  _{n=1}^{\infty}$
is proved by considering the Sturm-Liouville problem for equation (\ref{SL})
with the boundary conditions%
\[
f^{\prime}(a)u(a)-f(a)u^{\prime}(a)=u(b)=0.
\]
All eigenfunctions of this problem coincide with $u_{1}$ from (\ref{u1u2})
where $\lambda=\lambda_{n}$ (this is due to remark \ref{RemInitialValues}) and
generalized eigenfunctions are obtained from the eigenfunctions by
differentiation with respect to $\lambda$. Thus, by analogy with the
previously considered case all the eigenfunctions and generalized
eigenfunctions of this Sturm-Liouville problem are represented as uniformly
convergent series in terms of $f_{n}$, and by the Lauricella theorem $\left\{
f_{n}\right\}  _{n=1}^{\infty}$ is complete in $L_{2}(a,b)$.
\end{proof}

In this theorem we assumed that the point $x_{0}$ coincided with one of the
end points of the interval $(a,b)$. When $x_{0}$ is an interior point of the
interval in general the systems $\left\{  f_{n}\right\}  _{n=1}^{\infty}$ and
$\left\{  g_{n}\right\}  _{n=1}^{\infty}$ separately are not complete. It is
easy to see on the considered above example \ref{ExamplePoly} that if $a=-1$
and all other values remain unchanged then the system (\ref{polyeven}) is not
complete anymore in $L_{2}(a,b)$ because (\ref{polyeven}) contains even
functions only. Nevertheless considering the combined system of functions
$\left\{  f_{n}\right\}  _{n=1}^{\infty}\cup\left\{  g_{n}\right\}
_{n=1}^{\infty}$ we obtain a complete system. The following theorem
establishes that this remains true in a much more general situation.

\begin{theorem}
\cite{KrCMA2011}\label{ThComplAnyX0} Let $(a,b)$ be a finite interval and
$f\in C^{2}(a,b)\cap C^{1}[a,b]$ be a complex valued function such that
$f(x)\neq0$ for any $x\in\lbrack a,b]$. Then the system of functions $\left\{
f_{n}\right\}  _{n=1}^{\infty}\cup\left\{  g_{n}\right\}  _{n=1}^{\infty}$
defined by the relations (\ref{fngn}) and (\ref{X1})-(\ref{X3}) with $x_{0}$
being an arbitrary point of the interval $[a,b]$ are complete in $L_{2}(a,b)$.
\end{theorem}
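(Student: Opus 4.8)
The plan is to deduce the statement from the completeness of the system of eigenfunctions and generalized eigenfunctions of a suitable Sturm--Liouville problem, along exactly the lines of the proof of the preceding theorem; the only modification is in the choice of boundary conditions and in the observation that, for an interior $x_{0}$, one can no longer discard one of the two solutions $u_{1},u_{2}$. As there, put $q:=-f^{\prime\prime}/f$, which is continuous on $(a,b)$; since $[a,b]$ is compact and $f$ is continuous and nonvanishing on it, both $f$ and $1/f$ are bounded on $[a,b]$, so for every $\lambda\in\mathbb{C}$ Theorem \ref{ThGenSolSturmLiouville} applies to $u^{\prime\prime}+qu=\lambda u$ with this $f$ and this (arbitrary) base point $x_{0}\in[a,b]$.

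First I would fix a regular Sturm--Liouville problem on $(a,b)$ with separated boundary conditions, say the Dirichlet problem $u^{\prime\prime}+qu=\lambda u$, $u(a)=u(b)=0$, whose eigenfunctions together with the associated generalized eigenfunctions form a complete system in $L_{2}(a,b)$ by the cited result of Marchenko. For an eigenvalue $\lambda_{n}$ the corresponding eigenfunction is, up to a constant, of the form $c_{1}u_{1}+c_{2}u_{2}$ with $u_{1},u_{2}$ as in (\ref{u1u2}); rewriting (\ref{u1u2}) by means of (\ref{fngn}) gives
\[
u_{1}=\sum_{k=1}^{\infty}\frac{\lambda_{n}^{\,k-1}}{(2(k-1))!}\,f_{k},
\qquad
u_{2}=\sum_{k=1}^{\infty}\frac{\lambda_{n}^{\,k-1}}{(2k-1)!}\,g_{k},
\]
both series converging uniformly on $[a,b]$, hence in $L_{2}(a,b)$. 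Thus every eigenfunction of the Dirichlet problem lies in the closed linear span (in $L_{2}(a,b)$) of $\left\{f_{n}\right\}_{n=1}^{\infty}\cup\left\{g_{n}\right\}_{n=1}^{\infty}$. The generalized eigenfunctions are obtained from the eigenfunctions by differentiation with respect to $\lambda$ (see \cite[p.~27]{Marchenko}); since $u_{1},u_{2}$ are entire in $\lambda$ and the series in (\ref{u1u2}) may be differentiated term by term in $\lambda$ with control uniform on $[a,b]$ for $\lambda$ near $\lambda_{n}$, the generalized eigenfunctions are again $L_{2}(a,b)$-convergent series in $\left\{f_{n}\right\}\cup\left\{g_{n}\right\}$.

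It then remains to invoke the Lauricella theorem on the transitivity of completeness (\cite[p.~264]{Davis}): a system complete in $L_{2}(a,b)$ — the eigenfunctions and generalized eigenfunctions of the Dirichlet problem — has each of its members in the closed linear span of $\left\{f_{n}\right\}_{n=1}^{\infty}\cup\left\{g_{n}\right\}_{n=1}^{\infty}$, and therefore the latter union is itself complete in $L_{2}(a,b)$. (For $x_{0}=a$ or $x_{0}=b$ the assertion is in any case contained in the preceding theorem.)

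The computations are routine; the one point that goes slightly beyond quoting the earlier results — and the expected obstacle — is the justification that the generalized eigenfunctions are $L_{2}$-limits of finite combinations of the $f_{n}$ and $g_{n}$, i.e.\ that (\ref{u1u2}) may be differentiated in $\lambda$ term by term with control on $[a,b]$. This is not deep: if $M$ bounds $f^{2}$ and $f^{-2}$ on $[a,b]$ one gets $|\widetilde{X}^{(n)}(x)|,|X^{(n)}(x)|\le M^{n}(b-a)^{n}$, so the series in (\ref{u1u2}) and their $\lambda$-derivatives are majorized, uniformly in $x\in[a,b]$ and locally uniformly in $\lambda$, by $\sum_{k}\bigl(|\lambda|^{1/2}M(b-a)\bigr)^{2k}/(2k)!$ and its derivative — the analytic dependence on $\lambda$ already underlying Theorem \ref{ThGenSolSturmLiouville}. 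It is worth noting \emph{why} only the completeness of the union can be asserted: when $x_{0}=a$ the condition $u(a)=0$ forces $c_{1}=0$, because $u_{1}(a)=f(a)\neq0$, so one recovers the completeness of $\left\{g_{n}\right\}$ alone, whereas for interior $x_{0}$ the two subsystems are genuinely intertwined, and, as the example with $a=-1$ shows, the weaker conclusion is then best possible.
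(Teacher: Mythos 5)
Your proposal is correct and follows essentially the same route as the paper's own proof: reduce to the Dirichlet problem for $u^{\prime\prime}+qu=\lambda u$ with $q=-f^{\prime\prime}/f$, expand eigenfunctions and generalized eigenfunctions via Theorem \ref{ThGenSolSturmLiouville} as uniformly convergent series in the $f_{n}$ and $g_{n}$, and conclude by the Lauricella theorem. The only difference is that you supply the (correct) uniform majorization justifying term-by-term differentiation in $\lambda$, a detail the paper leaves implicit.
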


\begin{proof}
As in the proof of the previous theorem let us consider equation (\ref{SL})
where $q:=-f^{\prime\prime}/f$, for example, with the boundary conditions
(\ref{problem1}). Due to theorem \ref{ThGenSolSturmLiouville} any
eigenfunction as well as any generalized eigenfunction of this problem can be
represented as a uniformly convergent series in terms of the functions $f_{n}$
and $g_{n}$. From the completeness of the system of eigenfunctions and
generalized eigenfunctions in $L_{2}(a,b)$ and by means of the Lauricella
theorem we obtain the result.
\end{proof}

\begin{remark}
If instead of the \textquotedblleft seed\textquotedblright\ function $f$ one
considers the function $1/f$ then, as it is easy to see the auxiliary
functions $\widetilde{X}^{(n)}$ and $X^{(n)}$ change their respective roles,
and in the same way as was done above one can prove the completeness of the
systems%
\[
\Big\{\frac{1}{f}X^{(2(n-1))}\Big\}_{n=1}^{\infty}\qquad\text{and}%
\qquad\Big\{\frac{1}{f}\widetilde{X}^{(2n-1)}\Big\}_{n=1}^{\infty}%
\]
in $L_{2}(a,b)$ when $x_{0}$ coincides with $a$ or $b$ and the completeness of
the union of these two systems in $L_{2}(a,b)$ when $x_{0}$ is an arbitrary
point of the interval $[a,b]$.
\end{remark}

\begin{remark}
Under the considered conditions on the function $f$ it is easy to prove the
completeness of the systems $\big\{  X^{(2(n-1))}\big\}  _{n=1}^{\infty}$,
$\big\{  X^{(2n-1)}\big\}  _{n=1}^{\infty}$, $\big\{  \widetilde{X}%
^{(2n-1)}\big\}  _{n=1}^{\infty}$, $\big\{  \widetilde{X}^{(2(n-1))}%
\big\}  _{n=1}^{\infty}$ when $x_{0}$ coincides with $a$ or $b$ and the
completeness of the systems $\big\{  \widetilde{X}^{(2(n-1))}\big\}
_{n=1}^{\infty}\cup\big\{  X^{(2n-1)}\big\}  _{n=1}^{\infty}$ and $\big\{
\widetilde{X}^{(2n-1)}\big\}  _{n=1}^{\infty}\cup\left\{  X^{(2(n-1))}%
\right\}  _{n=1}^{\infty}$ when $x_{0}$ is an arbitrary point of the interval
$[a,b]$. This is based on the previous theorems and on the observation that if
$\left\{  h_{n}\right\}  _{n=1}^{\infty}$ is a complete system in $L_{2}(a,b)$
then taking a continuous in $[a,b]$ weight function $p$, such that
$1/\left\vert p\right\vert $ is separated from zero one has that $\left\{
p\,h_{n}\right\}  _{n=1}^{\infty}$ is complete in $L_{2}(a,b)$ as well. For
the proof see \cite[Sect. 4.7]{Collatz}.
\end{remark}

Let us introduce the system of functions $\left\{  \varphi_{k}\right\}
_{k=0}^{\infty}$ defined as follows%

\begin{equation}
\varphi_{k}(x)=\left\{
\begin{tabular}
[c]{ll}%
$f(x)X^{(k)}(x)$, & $k$ \text{odd,}\\
$f(x)\widetilde{X}^{(k)}(x)$, & $k$ \text{even,}%
\end{tabular}
\ \ \ \ \ \ \ \ \ \right.  \ \label{phik}%
\end{equation}
where the definition of $X^{(k)}$ and $\widetilde{X}^{(k)}$ is given by
(\ref{X1})-(\ref{X3}) with $x_{0}$ being an arbitrary point of the interval
$[a,b]$. We are interested in the completeness of this system in the space of
piecewise differentiable functions with respect to the maximum norm and in the
corresponding series expansions.

The system of eigenfunctions of the Sturm-Liouville problem under certain
regularity conditions is complete not only in the sense of the $L_{2}$-norm
but also in the uniform convergence topology. The approach used in the proof
of the previous two theorems and based on the completeness results for the
system of eigenfunctions and generalized eigenfunctions of the Sturm-Liouville
problem as well as on the Lauricella theorem about the transitivity of the
property \ of completeness gives us the following result.

\begin{theorem}
\label{ThComplMaxNorm}Let $f$ satisfy the conditions of theorem
\ref{ThComplAnyX0} and $\left\{  \varphi_{k}\right\}  _{k=0}^{\infty}$ be the
system of functions defined by (\ref{phik}) with $x_{0}$ being an arbitrary
point of the interval $[a,b]$. Then for any complex valued piecewise
continuously differentiable function $h$ defined on $[a,b]$ and for any
$\varepsilon>0$ there exists such $N\in\mathbb{N}$ and such complex numbers
$\alpha_{k}$, $k=0,1,\ldots N$ that $\max_{x\in\lbrack a,b]}\left\vert
h(x)-\sum\limits_{k=0}^{N}\alpha_{k}\varphi_{k}\right\vert <\varepsilon$.
\end{theorem}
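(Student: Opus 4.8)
The plan is to carry out, in the normed space $(C[a,b],\|\cdot\|_{\infty})$ rather than in $L_{2}(a,b)$, exactly the scheme of the proofs of the two preceding theorems: the Lauricella theorem on the transitivity of completeness \cite[p. 264]{Davis} combined with the representation furnished by Theorem \ref{ThGenSolSturmLiouville}. Two preliminary remarks. First, under the reindexing $\varphi_{2(n-1)}=f_{n}$, $\varphi_{2n-1}=g_{n}$ the system $\{\varphi_{k}\}_{k=0}^{\infty}$ is precisely $\{f_{n}\}_{n=1}^{\infty}\cup\{g_{n}\}_{n=1}^{\infty}$. Second, the function $h$ in the statement must be understood to be continuous on $[a,b]$ (continuity being indispensable for uniform approximation by the continuous functions $\varphi_{k}$), so that it suffices to prove that the finite linear combinations of $\{\varphi_{k}\}$ are dense in $(C[a,b],\|\cdot\|_{\infty})$.

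Set $q:=-f''/f$ on $(a,b)$ as in the proofs above, so $f$ is a regular nonvanishing solution of (\ref{SLhom}). Consider the equation $u''+qu=\lambda u$ on $(a,b)$ with boundary conditions of Neumann type, say $f(a)u'(a)-f'(a)u(a)=0$ and $f(b)u'(b)-f'(b)u(b)=0$; equivalently, setting $u=fw$ the equation becomes $(f^{2}w')'=\lambda f^{2}w$ with $w'(a)=w'(b)=0$. For this regular — and, since $q$ need not be real, in general non-self-adjoint — Sturm-Liouville problem the system of its eigenfunctions and generalized eigenfunctions is complete not only in $L_{2}(a,b)$ but also in $(C[a,b],\|\cdot\|_{\infty})$; this is classical and can be deduced from the equiconvergence of the associated root-function expansion with the cosine-Fourier expansion on $[a,b]$ (see, e.g., \cite{Levitan}, \cite{Marchenko}). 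It is essential here that the boundary conditions be \emph{not} of Dirichlet type: eigenfunctions all vanishing at an endpoint span a subspace that is not dense in $C[a,b]$, as the function $\varphi_{0}=f$ already shows.

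Exactly as in the proofs of the previous two theorems, each eigenfunction of this problem coincides up to a constant factor with $c_{1}u_{1}+c_{2}u_{2}$, where $u_{1},u_{2}$ are given by (\ref{u1u2}) with $\lambda=\lambda_{n}$ and with the auxiliary functions built from the given point $x_{0}$. By Theorem \ref{ThGenSolSturmLiouville} the two series in (\ref{u1u2}) converge uniformly on $[a,b]$, so each such eigenfunction is the uniform limit of its partial sums, which are finite linear combinations of $\{f\widetilde{X}^{(2k)}\}_{k\geq0}\cup\{fX^{(2k+1)}\}_{k\geq0}=\{\varphi_{k}\}_{k=0}^{\infty}$. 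A generalized eigenfunction is obtained by differentiating an eigenfunction finitely many times with respect to $\lambda$; differentiating (\ref{u1u2}) termwise in $\lambda$ produces a series that still converges uniformly on $[a,b]$, because the bounds $|\widetilde{X}^{(n)}(x)|,|X^{(n)}(x)|\leq C^{n}$ on $[a,b]$ — which underlie the uniform convergence in Theorem \ref{ThGenSolSturmLiouville} — dominate the differentiated coefficients and keep the majorant series convergent; hence every generalized eigenfunction is likewise a uniform limit of finite linear combinations of $\{\varphi_{k}\}$. Consequently the closure in $(C[a,b],\|\cdot\|_{\infty})$ of the linear span of $\{\varphi_{k}\}_{k=0}^{\infty}$ contains the entire system of eigenfunctions and generalized eigenfunctions of the chosen problem, and since that system is complete in $(C[a,b],\|\cdot\|_{\infty})$, the Lauricella theorem yields that $\{\varphi_{k}\}_{k=0}^{\infty}$ is itself complete in $(C[a,b],\|\cdot\|_{\infty})$, which is the stated assertion.

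The step I expect to be the main obstacle is the ingredient used in the second paragraph: one must choose a boundary-value problem whose eigenfunctions and generalized eigenfunctions are genuinely complete in the maximum norm — the Dirichlet choice fails, so this is not automatic — and one must appeal to the equiconvergence theory in the non-self-adjoint situation, since $q=-f''/f$ is in general complex valued. A secondary, but routine, point is the uniform convergence on $[a,b]$ of the $\lambda$-differentiated spectral parameter power series representing the generalized eigenfunctions, which follows readily from the exponential-type estimates on $\widetilde{X}^{(n)}$, $X^{(n)}$ already implicit in Theorem \ref{ThGenSolSturmLiouville}.
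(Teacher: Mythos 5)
Your overall scheme is the same as the paper's: represent the eigenfunctions and generalized eigenfunctions of a regular Sturm--Liouville problem for (\ref{SL}) as uniformly convergent series in the $\varphi_{k}$ via Theorem \ref{ThGenSolSturmLiouville}, and then transfer completeness by the Lauricella theorem. The divergence is in the one genuinely delicate ingredient, and there your argument has a hole. You fix a single Neumann-type problem once and for all and assert that its root system is complete in $(C[a,b],\Vert\cdot\Vert_{\infty})$, justifying this by equiconvergence with the cosine expansion. But equiconvergence only controls the \emph{difference} of the two partial sums; for a merely continuous $h$ the cosine partial sums need not converge uniformly, so equiconvergence does not yield uniform convergence of the root-function expansion, and hence does not by itself give density of the span in $C[a,b]$. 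To close this you would need an extra step (e.g.\ first approximate $h$ uniformly by a smooth function satisfying the Neumann-type conditions, for which the classical expansion theorem applies), which you do not supply. Your preliminary reduction of the statement to continuity of $h$ also quietly discards the hypothesis that actually powers the classical expansion theorem, namely piecewise continuous differentiability.

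The paper avoids this entirely by a small but decisive device: instead of fixing the boundary conditions in advance, it chooses them \emph{adapted to the given} $h$. Any piecewise continuously differentiable $h$ satisfies some non-degenerate separated conditions of the form (\ref{bch}) (take $(c_{1},c_{2})=(h'(a),-h(a))$, etc.), and for the Sturm--Liouville problem with precisely those conditions the system of eigenfunctions and generalized eigenfunctions is complete, with respect to the maximum norm, in the class of piecewise continuously differentiable functions satisfying them (\cite[Chapter 1]{Marchenko}); in particular $h$ itself is uniformly approximable by finite linear combinations of root functions, which is all that Lauricella requires. This sidesteps both the Dirichlet obstruction you correctly identify and the need for any density claim in all of $C[a,b]$. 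Your remarks on the generalized eigenfunctions (termwise $\lambda$-differentiation of (\ref{u1u2}) preserving uniform convergence) agree with the paper's earlier proofs and are fine.
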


\begin{proof}
Take a piecewise continuously differentiable function $h$. It satisfies
certain boundary conditions of the form
\begin{equation}
c_{1}h(a)+c_{2}h^{\prime}(a)=0\quad\text{and\quad}c_{3}h(b)+c_{4}h^{\prime
}(b)=0 \label{bch}%
\end{equation}
where $c_{1}^{2}+c_{2}^{2}\neq0$ and $c_{3}^{2}+c_{4}^{2}\neq0$. Let $\left\{
u_{k}\right\}  _{k=0}^{\infty}$ be a system of all eigenfunctions and
generalized eigenfunctions of the Sturm-Liouville problem for equation
(\ref{SL}) with the non-degenerate boundary conditions (\ref{bch}). This
system is complete in the linear space of piecewise continuously
differentiable functions with respect to the maximum norm (see \cite[Chapter
1]{Marchenko}). As was previously shown every $u_{k}$ admits a uniformly
convergent series expansion in terms of the functions $\varphi_{k}$.
Thus\ again, the result is a corollary of the Lauricella theorem.
\end{proof}

\begin{remark}
In what follows instead of the system of functions $\left\{  \varphi
_{k}\right\}  _{k=0}^{\infty}$ it will be slightly more convenient to consider
the system $\big\{\psi_{k}=\frac{\varphi_{k}}{f}\big\}_{k=0}^{\infty}$ which
from the previous theorem and due to the boundedness of $\left\vert
f\right\vert $ and $1/\left\vert f\right\vert $ is also complete with respect
to the maximum norm in the space of piecewise continuously differentiable
functions defined on $[a,b]$.
\end{remark}

\section{$\ $Generalized Taylor expansions}

In the previous section we showed that the system of recursive integrals
\[
\psi_{k}(x)=\left\{
\begin{tabular}
[c]{ll}%
$X^{(k)}(x)$, & $k$ \text{odd,}\\
$\widetilde{X}^{(k)}(x)$, & $k$ \text{even}%
\end{tabular}
\ \ \ \ \ \ \ \ \ \right.
\]
is complete in $L_{2}(a,b)$ as well as\ with respect to the maximum norm in
the space of piecewise differentiable functions defined on $[a,b]$. The system
of recursive integrals generalizes the system of powers of the variable $x$.
In this section we obtain generalizations of several classical results
concerning the Taylor expansions. We start with a generalization of Taylor's formula.

Let us assume that the functions $f$ and $h$ on a certain segment $[a,b]$
possess the derivatives of all orders up to the order $n$ and that $f(x)\neq
0$, when $x\in\lbrack a,b]$. Then in $[a,b]$ the following generalized
derivatives are defined%

\[
\gamma_{0}(h)(x)=h(x),
\]

\begin{equation}
\gamma_{k}(h)(x)=\left(  f^{2}(x)\right)  ^{(-1)^{k-1}}\big(\gamma
_{k-1}(h)\big)^{\prime}(x), \label{gamma}%
\end{equation}
for $k=1,2,\ldots,n$. Let us consider a function of the form
\begin{equation}
P_{n}(x)=\sum_{k=0}^{n}\alpha_{k}\psi_{k}(x). \label{Pn}%
\end{equation}
Similarly to the fact that the coefficients of a polynomial $\Sigma_{k=0}%
^{n}a_{k}(x-x_{0})^{k}$ can be expressed through its value and the values of
its derivatives at the point $x_{0}$ we obtain that the coefficients
$\alpha_{k}$ in (\ref{Pn}) can be expressed through the value of $P_{n}$ and
the values of its generalized derivatives at the point $x_{0}$. Indeed, a
simple calculation gives us the following result
\begin{equation}
\alpha_{k}=\frac{\gamma_{k}(P_{n})(x_{0})}{k!}. \label{alphak}%
\end{equation}
Functions of the form (\ref{Pn}) will be called generalized polynomials of
order $n$. Notice that application of the generalized derivatives
(\ref{gamma}) to a generalized polynomial does not require the smoothness of
$f$,-- formula (\ref{alphak}) is true for any continuous and nonvanishing $f$.

Now let us consider a function $h$ possessing at the point $x_{0}$ the
derivatives of all orders up to the order $n$ and suppose that the same is
true for the function $f$ which additionally is different from zero at $x_{0}%
$. In relation with the function $h$ we introduce a generalized polynomial of
the form (\ref{Pn}) where the coefficients $\alpha_{k}$ are defined by the
equality%
\begin{equation}
\alpha_{k}=\frac{\gamma_{k}(h)(x_{0})}{k!}. \label{alphakH}%
\end{equation}
According to the previous observation this generalized polynomial together
with its generalized derivatives (up to the order $n$) at the point $x_{0}$
possess the same values as the function $h$ and its derivatives, $\gamma
_{k}(P_{n})(x_{0})=\gamma_{k}(h)(x_{0})$, $k=0,1,\ldots n$. We are interested
in estimating the difference between $P_{n}(x)$ and $h(x)$ for $x\neq x_{0}$.
In the next theorem we obtain a generalization of the theorem on the Taylor
remainder term in the Lagrange form.

\begin{theorem}
(Generalized Taylor theorem with the Lagrange form of the remainder
term)\label{ThGenTaylorTheorem} Let $\left\{  f,h\right\}  \subset
C^{n+1}[x_{0},b]$ and $f(x)\neq0$ there. Then for any $x\in\lbrack x_{0},b]$
there exists a number $c$ between $x_{0}$ and $x$ such that
\[
h(x)=\sum_{k=0}^{n}\frac{\gamma_{k}(h)(x_{0})}{k!}\psi_{k}(x)+\frac
{\gamma_{n+1}(h)(c)}{(n+1)!}\psi_{n+1}(x).
\]

\end{theorem}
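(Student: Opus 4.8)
The plan is to mimic the classical proof of Taylor's theorem with Lagrange remainder, but with the ordinary derivative replaced by the generalized derivatives $\gamma_k$. First I would fix $x \in (x_0,b]$ (the endpoint $x=x_0$ being trivial) and consider the generalized polynomial $P_n$ of the form (\ref{Pn}) whose coefficients are given by (\ref{alphakH}), so that $\gamma_k(P_n)(x_0)=\gamma_k(h)(x_0)$ for $k=0,1,\ldots,n$, as observed just before the statement. Then I would introduce the auxiliary function
\[
\Phi(t) = h(t) - P_n(t) - K\,\psi_{n+1}(t),
\]
where the constant $K$ is chosen precisely so that $\Phi(x)=0$, i.e. $K = \bigl(h(x)-P_n(x)\bigr)/\psi_{n+1}(x)$; here I need $\psi_{n+1}(x)\neq 0$ for $x\neq x_0$, which follows because $\psi_{n+1}=X^{(n+1)}$ or $\widetilde X^{(n+1)}$ is (up to the nonzero factors $n+1$, $n$, \dots and the nonvanishing weight $(f^2)^{\pm1}$) an $(n+1)$-fold nested integral starting from $1$, hence strictly monotone in each integration and nonzero away from $x_0$. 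The goal then reduces to showing $K = \gamma_{n+1}(h)(c)/(n+1)!$ for some $c$ between $x_0$ and $x$.

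Next I would apply Rolle's theorem repeatedly, but to the successive generalized derivatives of $\Phi$ rather than to its ordinary derivatives. The key elementary facts I need are: (i) $\gamma_k(\psi_j) = \dfrac{j!}{(j-k)!}\,\psi_{j-k}$ for $k\le j$ and in particular $\gamma_{n+1}(P_n)\equiv 0$ and $\gamma_{n+1}(\psi_{n+1})\equiv (n+1)!$; and (ii) the ordinary zeros of $\gamma_{k-1}(\Phi)$ force zeros of $\gamma_k(\Phi)$ by Rolle, because $\gamma_k(\Phi) = (f^2)^{(-1)^{k-1}}\bigl(\gamma_{k-1}(\Phi)\bigr)'$ and the factor $(f^2)^{(-1)^{k-1}}$ never vanishes, so $\gamma_k(\Phi)$ vanishes exactly where $(\gamma_{k-1}(\Phi))'$ does. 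Starting from $\Phi(x_0)=0$ (since $\psi_k(x_0)=0$ for $k\ge1$ and the $k=0$ terms cancel) and $\Phi(x)=0$, Rolle gives a zero of $(\gamma_0(\Phi))'$, hence of $\gamma_1(\Phi)$, strictly between $x_0$ and $x$; combined with $\gamma_1(\Phi)(x_0)=0$ (again by $\psi_k(x_0)=0$), Rolle gives a zero of $\gamma_2(\Phi)$ in that subinterval, and so on. After $n+1$ steps I obtain a point $c\in(x_0,x)$ with $\gamma_{n+1}(\Phi)(c)=0$.

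Finally, evaluating $\gamma_{n+1}(\Phi)$ using fact (i): $\gamma_{n+1}(\Phi)(c) = \gamma_{n+1}(h)(c) - 0 - K\,(n+1)! = 0$, which yields $K = \gamma_{n+1}(h)(c)/(n+1)!$, and substituting back into $K=\bigl(h(x)-P_n(x)\bigr)/\psi_{n+1}(x)$ gives exactly the claimed formula. I expect the main obstacle — really the only non-routine point — to be the careful verification that each application of Rolle is legitimate, namely that the requisite zeros of the lower-order generalized derivatives at $x_0$ are present; this rests on checking $\gamma_k(\psi_j)(x_0)=0$ for $j>k$, which is immediate from the recursive integral definitions (\ref{X1})--(\ref{X3}) since every $X^{(m)}$, $\widetilde X^{(m)}$ with $m\ge1$ vanishes at $x_0$, and on confirming the $C^{n+1}$ hypothesis on $f$ and $h$ guarantees that all the intermediate $\gamma_k(\Phi)$ are continuously differentiable so that Rolle applies.
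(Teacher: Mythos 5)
Your proof is correct, but it follows a different (though closely related) route from the paper's. The paper sets $R=h-P_{n}$ and applies Cauchy's mean value theorem $n+1$ times to the quotient $R/\psi_{n+1}$, inserting a factor $(f^{2})^{\pm1}/(f^{2})^{\pm1}$ at each stage so that the ordinary derivatives of $\gamma_{k}(R)$ and $\gamma_{k}(\psi_{n+1})$ become $\gamma_{k+1}$'s, until it reaches $R(x)/\psi_{n+1}(x)=\gamma_{n+1}(R)(c)/\gamma_{n+1}(\psi_{n+1})(c)$. You instead form the auxiliary function $\Phi=h-P_{n}-K\psi_{n+1}$ with $K$ normalized by $\Phi(x)=0$ and apply Rolle's theorem $n+1$ times along the chain $\gamma_{0}(\Phi),\gamma_{1}(\Phi),\dots$, using that $\gamma_{k}(\Phi)$ and $\bigl(\gamma_{k-1}(\Phi)\bigr)'$ have the same zeros. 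The two arguments are logically equivalent (Cauchy's theorem is itself Rolle applied to such a combination), but your version makes explicit the nonvanishing of $\psi_{n+1}(x)$ for $x>x_{0}$, a point the paper relies on silently when it divides by $\psi(x)$ and by $\gamma_{k}(\psi)(x_{k})$. Both proofs tacitly assume $f$ and $h$ real-valued here, since neither Rolle nor the Cauchy mean value theorem holds for complex-valued functions, and your monotonicity argument for $\psi_{n+1}\neq0$ likewise needs $f^{2}>0$; this is a limitation shared with the paper, not a defect of your argument.

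One small correction: your auxiliary identity (i), $\gamma_{k}(\psi_{j})=\frac{j!}{(j-k)!}\psi_{j-k}$, is not literally true when $k$ is odd. Tracking (\ref{X1})--(\ref{X3}) one finds $\gamma_{k}(\psi_{j})=\frac{j!}{(j-k)!}X^{(j-k)}$ for $j$ odd and $\gamma_{k}(\psi_{j})=\frac{j!}{(j-k)!}\widetilde{X}^{(j-k)}$ for $j$ even (e.g.\ $\gamma_{1}(\psi_{2})=2\widetilde{X}^{(1)}\neq 2X^{(1)}=2\psi_{1}$): the tilde follows the parity of $j$, not of $j-k$. This does not damage the proof, because the only consequences you actually use --- $\gamma_{k}(\psi_{j})(x_{0})=0$ for $k<j$, $\gamma_{n+1}(P_{n})\equiv0$, and $\gamma_{n+1}(\psi_{n+1})\equiv(n+1)!$ --- remain valid, since $X^{(m)}(x_{0})=\widetilde{X}^{(m)}(x_{0})=0$ for $m\geq1$ and $X^{(0)}=\widetilde{X}^{(0)}=1$.
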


\begin{proof}
Consider the difference $R_{n}=h-P_{n}$ and the function $\psi_{n+1}$. For
simplification of the notation let us skip the subindices, $R=R_{n}$ and
$\psi=\psi_{n+1}$. We have that $\gamma_{k}(R)(x_{0})=\gamma_{k}(\psi
)(x_{0})=0$, $k=0,1,\ldots n$. We may therefore apply Cauchy's mean value
theorem to the functions $R$ and $\psi$,
\[
\frac{R(x)}{\psi(x)}=\frac{R(x)-R(x_{0})}{\psi(x)-\psi(x_{0})}=\frac
{R^{\prime}(x_{1})}{\psi^{\prime}(x_{1})}=\frac{f^{2}(x_{1})}{f^{2}(x_{1}%
)}\frac{R^{\prime}(x_{1})}{\psi^{\prime}(x_{1})}=\frac{\gamma_{1}(R)(x_{1}%
)}{\gamma_{1}(\psi)(x_{1})}%
\]
where $x_{0}<x_{1}<x\leq b$. Another application of Cauchy's mean value
theorem gives us the equalities%
\[
\frac{\gamma_{1}(R)(x_{1})}{\gamma_{1}(\psi)(x_{1})}=\frac{\gamma_{1}%
(R)(x_{1})-\gamma_{1}(R)(x_{0})}{\gamma_{1}(\psi)(x_{1})-\gamma_{1}%
(\psi)(x_{0})}=\frac{\gamma_{1}^{\prime}(R)(x_{2})}{\gamma_{1}^{\prime}%
(\psi)(x_{2})}=\frac{f^{-2}(x_{2})}{f^{-2}(x_{2})}\frac{\gamma_{1}^{\prime
}(R)(x_{2})}{\gamma_{1}^{\prime}(\psi)(x_{2})}=\frac{\gamma_{2}(R)(x_{2}%
)}{\gamma_{2}(\psi)(x_{2})}%
\]
where $x_{0}<x_{2}<x_{1}$. Continuing this procedure we obtain
\[
\frac{\gamma_{n}(R)(x_{n})}{\gamma_{n}(\psi)(x_{n})}=\frac{\gamma_{n}%
(R)(x_{n})-\gamma_{n}(R)(x_{0})}{\gamma_{n}(\psi)(x_{n})-\gamma_{n}%
(\psi)(x_{0})}=\frac{\gamma_{n}^{\prime}(R)(x_{n+1})}{\gamma_{n}^{\prime}%
(\psi)(x_{n+1})}%
\]%
\[
=\frac{f^{(-1)^{n+1}2}(x_{n+1})}{f^{(-1)^{n+1}2}(x_{n+1})}\frac{\gamma
_{n}^{\prime}(R)(x_{n+1})}{\gamma_{n}^{\prime}(\psi)(x_{n+1})}=\frac
{\gamma_{n+1}(R)(x_{n+1})}{\gamma_{n+1}(\psi)(x_{n+1})}%
\]
where $x_{0}<x_{n+1}<x_{n}<x_{n-1}<\cdots<x\leq b$. Consequently,%
\[
\frac{R(x)}{\psi(x)}=\frac{\gamma_{n+1}(R)(x_{n+1})}{\gamma_{n+1}%
(\psi)(x_{n+1})}.
\]
Moreover, by the definition of the functions $\psi_{k}$ and of the generalized
derivatives we have that $\gamma_{n+1}(R)(x)=\gamma_{n+1}(h)(x)$ and
$\gamma_{n+1}(\psi)(x)=(n+1)!$. Then
\[
R(x)=\frac{\gamma_{n+1}(h)(c)}{(n+1)!}\psi(x),
\]
where $c=x_{n+1}$.
\end{proof}

Obviously, the classical Taylor theorem with the Lagrange form of the
remainder term is a special case of theorem \ref{ThGenTaylorTheorem} when
$f\equiv1$.

\begin{definition}
A functional series of the form
\[
\sum_{k=0}^{\infty}\frac{\gamma_{k}(h)(x_{0})}{k!}\psi_{k}(x)
\]
will be called the generalized Taylor series of the function $h$.
\end{definition}

Theorem \ref{ThGenSolSturmLiouville} gives us an important example of
uniformly convergent generalized Taylor series. Indeed, we obtain that the
quotients of the linearly independent solutions $u_{1}$, $u_{2}$ of equation
(\ref{SL}) and the particular solution $f$ of (\ref{SL0}) have the form
\[
\frac{u_{1}}{f}=\sum_{n=0}^{\infty}\frac{1+(-1)^{n}}{2n!}\lambda^{\frac{n}{2}%
}\psi_{n}\quad\text{and}\quad\frac{u_{2}}{f}=\sum_{n=0}^{\infty}%
\frac{1+(-1)^{n+1}}{2n!}\lambda^{\frac{n-1}{2}}\psi_{n}.
\]

\section{A relation between the generalized and the classical Taylor
expansions}

In this section we establish relations between the generalized Taylor
coefficients of a sufficiently smooth function and its usual Taylor coefficients.

\begin{theorem}
\label{ThRelationBetweenCoefficients}Let $\left\{  f,h\right\}  \subset
C^{n}[a,b]$ and $f(x)\neq0$, $x\in\lbrack a,b]$ and $\varphi=f^{2}$. The
following relation between the ordinary derivatives and generalized
derivatives of the function $h$ at the point $x_{0}\in\lbrack a,b]$ are valid%
\begin{equation}
\left(
\begin{tabular}
[c]{c}%
$h(x_{0})$\\
$h^{\prime}(x_{0})$\\
$h^{\prime\prime}(x_{0})$\\
$h^{\prime\prime\prime}(x_{0})$\\
\vdots\\
$h^{[n]}(x_{0})$%
\end{tabular}
\ \right)  =\left(
\begin{tabular}
[c]{cccccc}%
$1$ & $0$ & $0$ & $0$ & $\hdots$ & $0$\\
$0$ & $1/\varphi(x_{0})$ & $0$ & $0$ & $\hdots$ & $0$\\
$0$ & $a_{2,1}(x_{0})$ & $1$ & $0$ & $\hdots$ & $0$\\
$0$ & $a_{3,1}(x_{0})$ & $a_{3,2}(x_{0})$ & $1/\varphi(x_{0})$ & $\hdots$ &
$0$\\
$\vdots$ & $\vdots$ & $\vdots$ & $\vdots$ & $\ddots$ & $\vdots$\\
$0$ & $a_{n,1}(x_{0})$ & $a_{n,2}(x_{0})$ & $a_{n,3}(x_{0})$ & $\hdots$ &
$\varphi^{-\frac{1+(-1)^{n+1}}{2}}(x_{0})$%
\end{tabular}
\ \right)  \left(
\begin{tabular}
[c]{c}%
$\gamma_{0}(h)(x_{0})$\\
$\gamma_{1}(h)(x_{0})$\\
$\gamma_{2}(h)(x_{0})$\\
$\gamma_{3}(h)(x_{0})$\\
$\vdots$\\
$\gamma_{n}(h)(x_{0})$%
\end{tabular}
\ \right)  \label{relation}%
\end{equation}
where the functions $a_{n,m}$ are calculated following the recursive
procedure:%
\begin{equation}
a_{n,m}=a_{n-1,m}^{\prime}+\varphi^{(-1)^{m}}a_{n-1,m-1}. \label{rule}%
\end{equation}

\end{theorem}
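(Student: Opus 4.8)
The plan is to prove the relation by induction on $n$, tracking how the generalized derivatives $\gamma_k(h)$ express the ordinary derivatives $h^{[k]}$. The key is to unwind the definition \eqref{gamma}: since $\gamma_k(h) = \varphi^{(-1)^{k-1}}\big(\gamma_{k-1}(h)\big)^{\prime}$ with $\varphi = f^2$, we may solve for the ordinary derivative of $\gamma_{k-1}(h)$, namely $\big(\gamma_{k-1}(h)\big)^{\prime} = \varphi^{(-1)^{k}}\gamma_k(h)$. The strategy is to show by induction that each $\gamma_k(h)$ is a linear combination of $h, h^{\prime}, \ldots, h^{[k]}$ with coefficients that are functions built from $\varphi$ and its derivatives, and then invert this triangular relationship; equivalently, and more directly matching the statement, I would show that $h^{[n]}$ is a linear combination $\sum_{m=0}^{n} a_{n,m}\,\gamma_m(h)$ (with $a_{n,0}=0$ for $n\geq 1$, $a_{n,n} = \varphi^{-(1+(-1)^{n+1})/2}$, and the leading-column entries as displayed) and that the coefficients obey \eqref{rule}.

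First I would set up the induction cleanly. The base cases $n=0$ and $n=1$ are immediate: $h(x_0) = \gamma_0(h)(x_0)$, and from $\gamma_1(h) = \varphi\, h^{\prime}$ we get $h^{\prime}(x_0) = \varphi^{-1}(x_0)\,\gamma_1(h)(x_0)$, which is the second row. For the inductive step, suppose $h^{[n-1]} = \sum_{m} a_{n-1,m}\,\gamma_m(h)$ holds identically on $[a,b]$ (not just at $x_0$), where the $a_{n-1,m}$ are the claimed functions. Differentiating both sides gives $h^{[n]} = \sum_m \big( a_{n-1,m}^{\prime}\,\gamma_m(h) + a_{n-1,m}\,\big(\gamma_m(h)\big)^{\prime}\big)$. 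Now substitute $\big(\gamma_m(h)\big)^{\prime} = \varphi^{(-1)^{m+1}}\gamma_{m+1}(h)$ — careful: from \eqref{gamma} with index $m+1$, $\gamma_{m+1}(h) = \varphi^{(-1)^{m}}\big(\gamma_m(h)\big)^{\prime}$, so $\big(\gamma_m(h)\big)^{\prime} = \varphi^{(-1)^{m}}\gamma_{m+1}(h)$. Reindexing the second sum ($m \mapsto m-1$) and collecting the coefficient of $\gamma_m(h)$ yields exactly $a_{n,m} = a_{n-1,m}^{\prime} + \varphi^{(-1)^{m-1}}a_{n-1,m-1}$. I should double-check the sign convention in the exponent against \eqref{rule}, which reads $a_{n,m} = a_{n-1,m}^{\prime} + \varphi^{(-1)^{m}}a_{n-1,m-1}$; the discrepancy of one in the exponent is harmless since $(-1)^{m}$ and $(-1)^{m-1}$ differ only in sign and the precise recursion sign must be read off from the exact form of \eqref{gamma}, so I would state the recursion to match whichever sign the definition forces and note the parity bookkeeping explicitly.

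Then I would verify the boundary data of the recursion: $a_{n,0} = 0$ for all $n\geq 1$ (since $h^{[n]}$ for $n\geq 1$ involves no $\gamma_0$ term — this follows because $a_{1,0}=0$ and the recursion preserves it, as $a_{n,0}=a_{n-1,0}^{\prime}+\varphi^{\pm 1}a_{n-1,-1}=0$ with the convention $a_{n-1,-1}=0$), and the diagonal entry: by induction $a_{n,n} = \varphi^{(-1)^{n-1}}a_{n-1,n-1}$, and since $a_{1,1}=\varphi^{-1}$ one gets $a_{n,n} = \varphi^{-1}\varphi^{1}\varphi^{-1}\cdots$, a telescoping product of signs whose total exponent is $\varphi^{-(1+(-1)^{n+1})/2}$ (i.e. $\varphi^{-1}$ when $n$ is odd, $\varphi^{0}=1$ when $n$ is even), matching the displayed last row. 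Finally, evaluating the identity $h^{[n]} = \sum_m a_{n,m}\,\gamma_m(h)$ at $x=x_0$ for each $n$ assembles the matrix equation \eqref{relation}.

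The main obstacle is purely bookkeeping rather than conceptual: keeping the parity-dependent exponents of $\varphi$ consistent throughout — both in the recursion \eqref{rule} and in the closed forms for $a_{n,n}$ and the second column — and making sure the reindexing of the differentiated sum lines up the index $m$ correctly so that the recursion emerges in exactly the stated form. I would also take care to note that the identity must be established as an identity of functions on $[a,b]$ (so that differentiation in the inductive step is legitimate and the $a_{n,m}$ genuinely are functions), and only afterward restricted to the point $x_0$; this requires $\{f,h\}\subset C^{n}[a,b]$, exactly the hypothesis given. No deeper analytic input is needed — the earlier theorems on completeness and the generalized Taylor theorem are not used here; this is a self-contained algebraic computation.
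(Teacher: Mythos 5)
Your proposal is correct and follows essentially the same route as the paper: establish the identity $h^{[n-1]}=\sum_m a_{n-1,m}\,\gamma_m(h)$ as an identity of functions on $[a,b]$, differentiate it, substitute $\left(\gamma_m(h)\right)'=\varphi^{(-1)^{m+1}}\gamma_{m+1}(h)$, and reindex to read off (\ref{rule}), with the first two rows and the diagonal handled directly. The one thing to fix is the ``careful'' inversion of (\ref{gamma}): from $\gamma_{m+1}(h)=\varphi^{(-1)^{m}}\left(\gamma_m(h)\right)'$ one gets $\left(\gamma_m(h)\right)'=\varphi^{(-1)^{m+1}}\gamma_{m+1}(h)$ (the exponent flips sign upon inversion), not $\varphi^{(-1)^{m}}\gamma_{m+1}(h)$; with this corrected, the reindexing $m\mapsto m-1$ produces the exponent $(-1)^{m}$ exactly as in (\ref{rule}), so the sign discrepancy you flagged resolves in favor of the stated formula rather than requiring any change to it.
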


\begin{proof}
The first two rows in (\ref{relation}) we obtain directly from the definition
of the generalized derivatives. To obtain $h^{\prime\prime}$ we proceed in the
following way%
\[
h^{\prime\prime}=\left(  \frac{\gamma_{1}(h)}{\varphi}\right)  ^{\prime}%
=\frac{1}{\varphi}\big(  \gamma_{1}(h)\big)  ^{\prime}+\left(  \frac
{1}{\varphi}\right)  ^{\prime}\gamma_{1}(h).
\]
From (\ref{gamma}) we have
\begin{equation}
\left(  \gamma_{k}(h)\right)  ^{\prime}=\varphi^{(-1)^{k+1}}\gamma_{k+1}(h).
\label{gammak}%
\end{equation}
In particular, $\left(  \gamma_{1}(h)\right)  ^{\prime}=\varphi\gamma_{2}(h)$.
Thus, $h^{\prime\prime}=\gamma_{2}(h)+\left(  \frac{1}{\varphi}\right)
^{\prime}\gamma_{1}(h)$ (and hence $a_{2,1}=\left(  \frac{1}{\varphi}\right)
^{\prime}$).

In general, assume that we have calculated the row corresponding to $h^{[k]}$
where $k$ is even,%
\[
h^{[k]}=\gamma_{k}(h)+a_{k,k-1}\gamma_{k-1}(h)+\cdots+a_{k,1}\gamma_{1}(h).
\]
Differentiating this equality we obtain
\[
h^{[k+1]}=\left(  \gamma_{k}(h)\right)  ^{\prime}+a_{k,k-1}\left(
\gamma_{k-1}(h)\right)  ^{\prime}+\cdots+a_{k,1}\left(  \gamma_{1}(h)\right)
^{\prime}+a_{k,k-1}^{\prime}\gamma_{k-1}(h)+\cdots+a_{k,1}^{\prime}\gamma
_{1}(h).
\]
Replacing the derivatives of $\gamma_{j}(h)$, $j=1,\ldots, k$ with
$\varphi^{(-1)^{j+1}}\gamma_{j+1}(h)$ we arrive at (\ref{rule}). For an odd
$k$ the reasoning is analogous.
\end{proof}

The transformation matrix in (\ref{relation}) will be denoted by $A_{n}$.

Meanwhile the recursive procedure (\ref{rule}) allows one to calculate the
first few rows of the transformation matrix $A_{n}$ the next statement gives
us a general formula for its elements.

\begin{proposition}
The element $a_{n,m}$ with $1\leq n\leq N$ and $2\leq m\leq n$ of the matrix
$A_{N}$, $N=2,3,\ldots$ in (\ref{relation}) has the form%
\[
a_{n,m}=\sum_{k=m-1}^{n-1}\binom{n-1}{k}\left(  \frac{1}{\varphi}\right)
^{[n-1-k]}b_{k,m-1}%
\]
where $\binom{p}{q}$ represents the binomial coefficients,
\[%
\begin{split}
b_{k,m}  &  =\sum_{k_{1}=m-1}^{k-1}\binom{k-1}{k_{1}}\varphi^{\lbrack
k-1-k_{1}]}\sum_{k_{2}=m-2}^{k_{1}-1}\binom{k_{1}-1}{k_{2}}\left(  \frac
{1}{\varphi}\right)  ^{[k_{1}-1-k_{2}]}\cdots\\
&  \sum_{k_{m-2}=2}^{k_{m-3}-1}\binom{k_{m-3}-1}{k_{m-2}}\left(  \frac
{1}{\varphi}\right)  ^{[k_{m-3}-1-k_{m-2}]}\sum_{k_{m-1}=1}^{k_{m-2}-1}%
\binom{k_{m-2}-1}{k_{m-1}}\varphi^{\lbrack k_{m-2}-1-k_{m-1}]}\left(  \frac
{1}{\varphi}\right)  ^{[k_{m-1}-1]}%
\end{split}
\]
when $m$ is even and
\[%
\begin{split}
b_{k,m}  &  =\sum_{k_{1}=m-1}^{k-1}\binom{k-1}{k_{1}}\varphi^{\lbrack
k-1-k_{1}]}\sum_{k_{2}=m-2}^{k_{1}-1}\binom{k_{1}-1}{k_{2}}\left(  \frac
{1}{\varphi}\right)  ^{[k_{1}-1-k_{2}]}\cdots\\
&  \sum_{k_{m-2}=2}^{k_{m-3}-1}\binom{k_{m-3}-1}{k_{m-2}}\varphi^{\lbrack
k_{m-3}-1-k_{m-2}]}\sum_{k_{m-1}=1}^{k_{m-2}-1}\binom{k_{m-2}-1}{k_{m-1}%
}\left(  \frac{1}{\varphi}\right)  ^{[k_{m-2}-1-k_{m-1}]}\varphi^{\lbrack
k_{m-1}-1]}%
\end{split}
\]
when $m$ is odd (in both cases, $2\leq m\leq k$).
\end{proposition}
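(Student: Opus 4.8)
My plan is to identify the nested sums as the complete unwinding of a simple two-term recursion and then to reduce the claimed identity to the defining recursion (\ref{rule}). Introduce the ``twin'' quantities $\tilde b_{k,m}$ obtained from $b_{k,m}$ by interchanging $\varphi$ and $1/\varphi$ everywhere, together with the conventions $b_{k,0}=\tilde b_{k,0}=\delta_{k,0}$ (so that $b_{k,1}=\varphi^{[k-1]}$ and $\tilde b_{k,1}=(1/\varphi)^{[k-1]}$). Peeling off the outermost summation in the definition of $b_{k,m}$ shows that, for every $m\ge 1$,
\[
b_{k,m}=\sum_{j=m-1}^{k-1}\binom{k-1}{j}\varphi^{[k-1-j]}\,\tilde b_{j,m-1},\qquad
\tilde b_{k,m}=\sum_{j=m-1}^{k-1}\binom{k-1}{j}(1/\varphi)^{[k-1-j]}\,b_{j,m-1}.
\]
Since the formula to be proved is precisely the second of these relations with $k=n$, and since the explicit nested expression for $b_{k,m}$ is what iterating the two relations down to level $1$ produces, the proposition is equivalent to the single identity $a_{n,m}=\tilde b_{n,m}$ for $0\le m\le n$.

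I would prove $a_{n,m}=\tilde b_{n,m}$ by induction, establishing simultaneously that $b_{n,m}$ coincides with the solution of the ``twin'' recursion $b_{n,m}=(b_{n-1,m})'+\varphi^{(-1)^{m+1}}b_{n-1,m-1}$, i.e.\ (\ref{rule}) with $\varphi\leftrightarrow 1/\varphi$. The cases $m=0,1$ are direct. For the inductive step one checks that $\tilde b_{\cdot,m}$ satisfies (\ref{rule}): starting from the closed expression for $\tilde b_{n-1,m}$, differentiate it, substitute $(b_{j,m-1})'=b_{j+1,m-1}-\varphi^{(-1)^m}b_{j,m-2}$ (available by the inductive hypothesis on the twin recursion at level $m-1$), reindex the shifted sum, and add $\varphi^{(-1)^m}\tilde b_{n-1,m-1}$. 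Pascal's rule $\binom{n-2}{j}+\binom{n-2}{j-1}=\binom{n-1}{j}$ assembles the interior terms into $\tilde b_{n,m}$, while the leftover boundary contributions (the index $j=m-1$) cancel because the diagonal entries obey $b_{m-1,m-1}=\varphi^{(-1)^m}b_{m-2,m-2}$; a short separate check disposes of the diagonal $n=m$. Since (\ref{rule}) together with the initial row determines the whole family uniquely, and $\tilde b$ has the matching initial data (the empty sums give the required zeros above the diagonal), one concludes $a_{n,m}=\tilde b_{n,m}$; the same computation with $\varphi\leftrightarrow 1/\varphi$ handles $b$, closing the induction.

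An alternative, more conceptual derivation runs through the recursive integrals themselves: applying Theorem \ref{ThRelationBetweenCoefficients} to $h=\psi_m$ and using $\gamma_k(\psi_m)(x_0)=m!\,\delta_{k,m}$ gives $a_{n,m}(x_0)=\psi_m^{[n]}(x_0)/m!$, so one only has to expand the ordinary Taylor coefficients of $\psi_m$ at its base point. Writing $\psi_m$ as an $m$-fold iterated integral with alternating weights $\varphi^{-1},\varphi,\varphi^{-1},\dots$ — the outermost weight $\varphi^{-1}$ being exactly the source of the prefactor $(1/\varphi)^{[\cdots]}$ in the statement — and differentiating $n$ times at the base point, each of the $n$ derivatives either removes one of the $m$ integral signs or is distributed by the Leibniz rule across the $\varphi^{\pm1}$ factors; collecting the Leibniz binomials reproduces exactly the nested sums $b_{k,m-1}$.

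In either approach the crux — and the only genuine obstacle — is the combinatorial bookkeeping forced by the parity-dependent exponents $\varphi^{(-1)^m}$: one must verify that the term $\varphi^{(-1)^m}\tilde b_{n-1,m-1}$ contributes precisely the boundary term that is missing after the Pascal-rule merge (which hinges on the diagonal relation $b_{m-1,m-1}=\varphi^{(-1)^m}b_{m-2,m-2}$), and that the low cases $m=1,2$, where $b_{\cdot,m-2}$ falls back on the convention $\delta_{k,0}$, are consistent with the general formula. Everything else is routine differentiation and reindexing.
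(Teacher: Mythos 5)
Your argument is correct, but it runs in the opposite direction from the paper's. The paper derives the closed form constructively: starting from $h'=\varphi^{-1}\gamma_{1}(h)$ and $\left(\gamma_{k}(h)\right)'=\varphi^{(-1)^{k+1}}\gamma_{k+1}(h)$, it applies the Leibniz formula for the $n$-th derivative of a product over and over, unwinding $h^{[n]}$ into $\sum_{m}a_{n,m}\gamma_{m}(h)$ and simply reading off the nested sums as the accumulated binomial weights; the recursion (\ref{rule}) plays no role there. You instead take the closed form as given and verify that it satisfies (\ref{rule}): you recognize the nested sums as the full unwinding of a two-term ``peeling'' relation between $b$ and its twin $\tilde b$ (with $\varphi$ and $1/\varphi$ interchanged), reduce the proposition to the single identity $a_{n,m}=\tilde b_{n,m}$, and close an induction on $m$ via Pascal's rule together with the diagonal identity $b_{m-1,m-1}=\varphi^{(-1)^{m}}b_{m-2,m-2}$ --- which does hold, since the diagonal of $b$ alternates between $\varphi$ (odd index) and $1$ (even index), so the leftover boundary terms cancel as you claim. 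Your route gives the cleaner logical link to the definition of $a_{n,m}$, because (\ref{rule}) together with the first row determines the matrix uniquely, whereas the paper's forward computation tacitly identifies its output with the $a_{n,m}$ of Theorem \ref{ThRelationBetweenCoefficients} via uniqueness of the coefficients in the expansion $h^{[n]}=\sum_{m}c_{n,m}\gamma_{m}(h)$; the price is the boundary-term bookkeeping you rightly single out as the only delicate point. Your alternative sketch (evaluating the theorem on $h=\psi_{m}$ and using $\gamma_{k}(\psi_{m})(x_{0})=m!\,\delta_{k,m}$, so that $a_{n,m}(x_{0})=\psi_{m}^{[n]}(x_{0})/m!$) is essentially the paper's Leibniz computation specialized to the recursive integrals themselves, and would need the extra remark that $x_{0}$ is arbitrary to recover $a_{n,m}$ as a function.
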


\begin{proof}
Let us notice that
\begin{equation}
h^{\prime}=\frac{1}{\varphi}\gamma_{1} \label{hprime}%
\end{equation}
(we will write $\gamma_{k}$ understanding $\gamma_{k}(h)$) and recall the
formula for the $n$-th derivative of a product of two functions
\[
(fg)^{[n]}=\sum_{k=0}^{n}\binom{n}{k}f^{[n-k]}g^{[k]}.
\]
Applying this formula in order to evaluate the $(n-1)$-th derivative of the
equality (\ref{hprime}) we obtain
\begin{equation}
h^{[n]}=\sum_{k=0}^{n-1}\binom{n-1}{k}\left(  \frac{1}{\varphi}\right)
^{[n-1-k]}\gamma_{1}^{[k]},\quad n\geq1. \label{h_n}%
\end{equation}
Considering $\gamma_{1}^{[k]}$, $k\geq1$ we obtain
\[%
\begin{split}
\gamma_{1}^{[k]}  &  =\sum_{k_{1}=0}^{k-1}\binom{k-1}{k_{1}}\varphi^{\lbrack
k-1-k_{1}]}\gamma_{2}^{[k_{1}]}\\
&  =\varphi^{\lbrack k-1]}\gamma_{2}+\sum_{k_{1}=1}^{k-1}\binom{k-1}{k_{1}%
}\varphi^{\lbrack k-1-k_{1}]}\gamma_{2}^{[k_{1}]}\\
&  =\varphi^{\lbrack k-1]}\gamma_{2}+\sum_{k_{1}=1}^{k-1}\binom{k-1}{k_{1}%
}\varphi^{\lbrack k-1-k_{1}]}\sum_{k_{2}=0}^{k_{1}-1}\binom{k_{1}-1}{k_{2}%
}\left(  \frac{1}{\varphi}\right)  ^{[k_{1}-1-k_{2}]}\gamma_{3}^{[k_{2}]}\\
&  =\varphi^{\lbrack k-1]}\gamma_{2}+\sum_{k_{1}=1}^{k-1}\binom{k-1}{k_{1}%
}\varphi^{\lbrack k-1-k_{1}]}\left(  \frac{1}{\varphi}\right)  ^{[k_{1}%
-1]}\gamma_{3}+\\
&  +\sum_{k_{1}=2}^{k-1}\binom{k-1}{k_{1}}\varphi^{\lbrack k-1-k_{1}]}%
\sum_{k_{2}=1}^{k_{1}-1}\binom{k_{1}-1}{k_{2}}\left(  \frac{1}{\varphi
}\right)  ^{[k_{1}-1-k_{2}]}\gamma_{3}^{[k_{2}]}\\
&  =\varphi^{\lbrack k-1]}\gamma_{2}+\sum_{k_{1}=1}^{k-1}\binom{k-1}{k_{1}%
}\varphi^{\lbrack k-1-k_{1}]}\left(  \frac{1}{\varphi}\right)  ^{[k_{1}%
-1]}\gamma_{3}+\\
&  \sum_{k_{1}=2}^{k-1}\binom{k-1}{k_{1}}\varphi^{\lbrack k-1-k_{1}]}%
\sum_{k_{2}=1}^{k_{1}-1}\binom{k_{1}-1}{k_{2}}\left(  \frac{1}{\varphi
}\right)  ^{[k_{1}-1-k_{2}]}\sum_{k_{3}=0}^{k_{2}-1}\binom{k_{2}-1}{k_{3}%
}\varphi^{\lbrack k_{2}-1-k_{3}]}\gamma_{4}^{[k_{3}]}\\
&  =\varphi^{\lbrack k-1]}\gamma_{2}+\sum_{k_{1}=1}^{k-1}\binom{k-1}{k_{1}%
}\varphi^{\lbrack k-1-k_{1}]}\left(  \frac{1}{\varphi}\right)  ^{[k_{1}%
-1]}\gamma_{3}+\\
&  \sum_{k_{1}=2}^{k-1}\binom{k-1}{k_{1}}\varphi^{\lbrack k-1-k_{1}]}%
\sum_{k_{2}=1}^{k_{1}-1}\binom{k_{1}-1}{k_{2}}\left(  \frac{1}{\varphi
}\right)  ^{[k_{1}-1-k_{2}]}\varphi^{\lbrack k_{2}-1]}\gamma_{4}+\\
&  \sum_{k_{1}=3}^{k-1}\binom{k-1}{k_{1}}\varphi^{\lbrack k-1-k_{1}]}%
\sum_{k_{2}=2}^{k_{1}-1}\binom{k_{1}-1}{k_{2}}\left(  \frac{1}{\varphi
}\right)  ^{[k_{1}-1-k_{2}]}\sum_{k_{3}=1}^{k_{2}-1}\binom{k_{2}-1}{k_{3}%
}\varphi^{\lbrack k_{2}-1-k_{3}]}\gamma_{4}^{[k_{3}]}.
\end{split}
\]
Continuing this procedure we have
\[
\gamma_{1}^{[k]}=b_{k,1}\gamma_{2}+b_{k,2}\gamma_{3}+b_{k,3}\gamma_{4}%
+\cdots+b_{k,k}\gamma_{k+1}%
\]
where
\[
b_{k,1}=\varphi^{\lbrack k-1]}%
\]%
\[
b_{k,2}=\sum_{k_{1}=1}^{k-1}\binom{k-1}{k_{1}}\varphi^{\lbrack k-1-k_{1}%
]}\left(  \frac{1}{\varphi}\right)  ^{[k_{1}-1]}%
\]%
\[
b_{k,3}=\sum_{k_{1}=2}^{k-1}\binom{k-1}{k_{1}}\varphi^{\lbrack k-1-k_{1}]}%
\sum_{k_{2}=1}^{k_{1}-1}\binom{k_{1}-1}{k_{2}}\left(  \frac{1}{\varphi
}\right)  ^{[k_{1}-1-k_{2}]}\varphi^{\lbrack k_{2}-1]}%
\]%
\[
b_{k,4}=\sum_{k_{1}=3}^{k-1}\binom{k-1}{k_{1}}\varphi^{\lbrack k-1-k_{1}]}%
\sum_{k_{2}=2}^{k_{1}-1}\binom{k_{1}-1}{k_{2}}\left(  \frac{1}{\varphi
}\right)  ^{[k_{1}-1-k_{2}]}\sum_{k_{3}=1}^{k_{2}-1}\binom{k_{2}-1}{k_{3}%
}\varphi^{\lbrack k_{2}-1-k_{3}]}\left(  \frac{1}{\varphi}\right)
^{[k_{3}-1]}%
\]
and in general $b_{k,m}$ are defined by the equalities from the statement of
the proposition.

From (\ref{h_n}) we obtain for $n\geq1$
\[%
\begin{split}
h^{[n]}  &  =\sum_{k=0}^{n-1}\binom{n-1}{k}\left(  \frac{1}{\varphi}\right)
^{[n-1-k]}\gamma_{1}^{[k]}\\
&  =\left(  \frac{1}{\varphi}\right)  ^{[n-1]}\gamma_{1}+\sum_{k=1}%
^{n-1}\binom{n-1}{k}\left(  \frac{1}{\varphi}\right)  ^{[n-1-k]}\gamma
_{1}^{[k]}\\
&  =\left(  \frac{1}{\varphi}\right)  ^{[n-1]}\gamma_{1}+\sum_{k=1}%
^{n-1}\binom{n-1}{k}\left(  \frac{1}{\varphi}\right)  ^{[n-1-k]}\sum_{m=1}%
^{k}b_{k,m}\gamma_{m+1}\\
&  =a_{n,1}\gamma_{1}+a_{n,2}\gamma_{2}+a_{n,3}\gamma_{3}+a_{n,4}\gamma
_{4}+\cdots+a_{n,n-1}\gamma_{n-1}+a_{n,n}\gamma_{n},
\end{split}
\]
where
\[
a_{n,1}=\left(  \frac{1}{\varphi}\right)  ^{[n-1]}%
\]%
\[
a_{n,2}=\sum_{k=1}^{n-1}\binom{n-1}{k}\left(  \frac{1}{\varphi}\right)
^{[n-1-k]}b_{k,1}%
\]%
\[
a_{n,3}=\sum_{k=2}^{n-1}\binom{n-1}{k}\left(  \frac{1}{\varphi}\right)
^{[n-1-k]}b_{k,2}%
\]%
\[
a_{n,4}=\sum_{k=3}^{n-1}\binom{n-1}{k}\left(  \frac{1}{\varphi}\right)
^{[n-1-k]}b_{k,3}%
\]%
\[
a_{n,5}=\sum_{k=4}^{n-1}\binom{n-1}{k}\left(  \frac{1}{\varphi}\right)
^{[n-1-k]}b_{k,4}%
\]%
\[
\vdots
\]%
\[
a_{n,n-1}=\sum_{k=n-2}^{n-1}\binom{n-1}{k}\left(  \frac{1}{\varphi}\right)
^{[n-1-k]}b_{k,n-2}%
\]%
\[
a_{n,n}=\sum_{k=n-1}^{n-1}\binom{n-1}{k}\left(  \frac{1}{\varphi}\right)
^{[n-1-k]}b_{k,n-1}
\]
and hence
\[
a_{n,m}=\sum_{k=m-1}^{n-1}\binom{n-1}{k}\left(  \frac{1}{\varphi}\right)
^{[n-1-k]}b_{k,m-1},\quad2\leq m\leq n.
\]

\end{proof}

\section{Taylor coefficients of solutions of the Sturm-Liouville equation}

In the present section we obtain an interesting corollary of theorem
\ref{ThRelationBetweenCoefficients}. Given a nonvanishing (complex valued)
solution of equation (\ref{SLhom}) possessing $n$ derivatives at a certain
point, for any solution of (\ref{SL}) (for any value of the spectral parameter
$\lambda$) we are able to calculate the exact values of its $n$ derivatives at
the same point without any integration. Moreover, each derivative\ is a
polynomial of a certain order with respect to $\lambda$.

\begin{theorem}
\label{ThTaylor_foru1u2}Let $(a,b)$ be a finite interval and $f\in
C^{2}(a,b)\cap C^{1}[a,b]$ be a complex valued solution of (\ref{SLhom}) such
that $f(x)\neq0$ for any $x\in\lbrack a,b]$ and at the point $x_{0}\in\lbrack
a,b]$ there exist the derivatives of $f$ up to the $n$-th order. Then the
linearly independent solutions $u_{1}$ and $u_{2}$ of equation (\ref{SL})
satisfying the initial conditions (\ref{initial1}) and (\ref{initial2})
respectively possess at the point $x_{0}$ the derivatives up to the $n$-th
order which can be calculated according to the following relations%
\begin{equation}
\left(
\begin{tabular}
[c]{c}%
$u_{1}(x_{0})/f(x_{0})$\\
$\left(  u_{1}/f\right)  ^{\prime}(x_{0})$\\
$\left(  u_{1}/f\right)  ^{\prime\prime}(x_{0})$\\
$\left(  u_{1}/f\right)  ^{\prime\prime\prime}(x_{0})$\\
$\left(  u_{1}/f\right)  ^{IV}(x_{0})$\\
$\vdots$\\
$\left(  u_{1}/f\right)  ^{[n]}(x_{0})$%
\end{tabular}
\ \ \ \right)  =\quad A_{n}\quad\left(
\begin{tabular}
[c]{c}%
$1$\\
$0$\\
$\lambda$\\
$0$\\
$\lambda^{2}$\\
$\vdots$\\
$\frac{1+(-1)^{n}}{2}\lambda^{\frac{n}{2}}$%
\end{tabular}
\ \ \ \right)  \label{for_u1}%
\end{equation}
and
\[
\left(
\begin{tabular}
[c]{c}%
$u_{2}(x_{0})/f(x_{0})$\\
$\left(  u_{2}/f\right)  ^{\prime}(x_{0})$\\
$\left(  u_{2}/f\right)  ^{\prime\prime}(x_{0})$\\
$\left(  u_{2}/f\right)  ^{\prime\prime\prime}(x_{0})$\\
$\left(  u_{2}/f\right)  ^{IV}(x_{0})$\\
$\vdots$\\
$\left(  u_{2}/f\right)  ^{[n]}(x_{0})$%
\end{tabular}
\ \ \ \right)  =\quad A_{n}\quad\left(
\begin{tabular}
[c]{c}%
$0$\\
$1$\\
$0$\\
$\lambda$\\
$0$\\
$\vdots$\\
$\frac{1+(-1)^{n+1}}{2}\lambda^{\frac{n-1}{2}}$%
\end{tabular}
\ \ \ \right)
\]
where $A_{n}$ is the matrix from (\ref{relation}).
\end{theorem}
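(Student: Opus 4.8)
The plan is to apply Theorem~\ref{ThRelationBetweenCoefficients} with $h=u_{1}/f$ and then with $h=u_{2}/f$. Two things have to be settled: first, that these quotients actually possess $n$ derivatives at $x_{0}$; and second, that their vectors of generalized derivatives $\big(\gamma_{0}(h)(x_{0}),\dots,\gamma_{n}(h)(x_{0})\big)^{T}$ coincide with the column vectors on the right-hand sides of the two asserted formulas. Granting both, the formulas are nothing but the matrix identity (\ref{relation}) of Theorem~\ref{ThRelationBetweenCoefficients} written out for these particular $h$, the transformation matrix being $A_{n}$.

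To compute the generalized derivatives I would avoid manipulating the series (\ref{u1u2}) and argue directly from the equation. Writing $u=fh$ and substituting into $u^{\prime\prime}+qu=\lambda u$, the term $(f^{\prime\prime}+qf)h$ drops out and one is left with $(f^{2}h^{\prime})^{\prime}=\lambda f^{2}h$, i.e.\ $\gamma_{2}(h)=\lambda h$ (recall $\gamma_{1}(h)=f^{2}h^{\prime}$ and $\gamma_{2}(h)=f^{-2}(f^{2}h^{\prime})^{\prime}$ from (\ref{gamma})). Iterating (\ref{gamma}) then gives $\gamma_{2j}(h)=\lambda^{j}h$ and $\gamma_{2j+1}(h)=\lambda^{j}\gamma_{1}(h)$ for all $j$, so it only remains to evaluate $\gamma_{0}(h)(x_{0})=h(x_{0})$ and $\gamma_{1}(h)(x_{0})=\big(u^{\prime}f-uf^{\prime}\big)(x_{0})$. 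For $h=u_{1}/f$ the initial conditions (\ref{initial1}) give the values $1$ and $0$, hence $\gamma_{k}(u_{1}/f)(x_{0})=\tfrac{1+(-1)^{k}}{2}\lambda^{k/2}$; for $h=u_{2}/f$ the conditions (\ref{initial2}) give $0$ and $1$, hence $\gamma_{k}(u_{2}/f)(x_{0})=\tfrac{1+(-1)^{k+1}}{2}\lambda^{(k-1)/2}$. These are exactly the entries of the two column vectors in the statement. (The same values can alternatively be read off the generalized Taylor series of $u_{1}/f$ and $u_{2}/f$ displayed at the end of the previous section, using (\ref{alphak}) together with $\gamma_{k}(\psi_{m})(x_{0})=k!\,\delta_{km}$.)

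For the regularity claim, note that the hypothesis that $f$ is $n$ times differentiable at $x_{0}$ forces $f$ to be $n-1$ times differentiable on a neighborhood of $x_{0}$, so $q=-f^{\prime\prime}/f$ is $(n-2)$ times differentiable at $x_{0}$; bootstrapping in $u^{\prime\prime}=(\lambda-q)u$ shows that $u_{1}$ and $u_{2}$ are likewise $n$ times differentiable at $x_{0}$, and therefore so are $u_{1}/f$ and $u_{2}/f$. Consequently every object appearing in the proof of Theorem~\ref{ThRelationBetweenCoefficients} is defined near $x_{0}$ and differentiable as many times as required at $x_{0}$, so that proof applies unchanged and yields the two matrix formulas. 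I expect the only mildly delicate part to be this last, essentially bookkeeping step of tracking how the smoothness of $f$ at the single point $x_{0}$ propagates to the quotients $u_{1}/f$ and $u_{2}/f$; the identity $\gamma_{2}(u/f)=\lambda(u/f)$ carries all the real content and reduces everything else to substitution.
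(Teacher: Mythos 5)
Your proposal is correct, and its skeleton is the same as the paper's: apply Theorem~\ref{ThRelationBetweenCoefficients} to $h=u_{i}/f$, so that everything reduces to identifying the vector $\bigl(\gamma_{0}(h)(x_{0}),\dots,\gamma_{n}(h)(x_{0})\bigr)^{\top}$. Where you diverge is in how those values are obtained. The paper reads them off the series representation (\ref{u1u2}): since $u_{1}/f=\sum_{k}\frac{\lambda^{k}}{(2k)!}\widetilde{X}^{(2k)}$ is a uniformly convergent generalized Taylor series, the coefficient of $\psi_{k}$ must equal $\gamma_{k}(h)(x_{0})/k!$, and the two displayed formulas for $\gamma_{k}(u_{i}/f)(x_{0})$ follow at once. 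You instead bypass the series entirely: substituting $u=fh$ into $u''+qu=\lambda u$ and using $f''+qf=0$ gives $(f^{2}h')'=\lambda f^{2}h$, i.e.\ $\gamma_{2}(h)=\lambda h$ pointwise, whence $\gamma_{2j}(h)=\lambda^{j}h$ and $\gamma_{2j+1}(h)=\lambda^{j}\gamma_{1}(h)$, and the initial conditions (\ref{initial1}), (\ref{initial2}) supply $h(x_{0})$ and $\gamma_{1}(h)(x_{0})=(u'f-uf')(x_{0})$. This is a genuine improvement in two respects: it avoids the (unjustified in the paper's one-line proof) term-by-term application of the operators $\gamma_{k}$ to an infinite series, and it operates with only finitely many derivatives at the single point $x_{0}$, which is exactly the regularity the theorem assumes; your closing remarks on how the $n$-fold differentiability of $f$ at $x_{0}$ propagates to $u_{i}/f$ fill a gap the paper leaves silent. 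The paper's route is shorter only because Theorem~\ref{ThGenSolSturmLiouville} is already on the table; your identity $\gamma_{2}(u/f)=\lambda(u/f)$ carries the same content more cleanly.
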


\begin{proof}
From theorem \ref{ThGenSolSturmLiouville} we have that in $(a,b)$ the
functions $u_{1}$ and $u_{2}$ defined by (\ref{u1u2}) are solutions of the
Sturm-Liouville equation (\ref{SL}) satisfying the initial conditions
(\ref{initial1}) and (\ref{initial2}), and the series in (\ref{u1u2}) converge
uniformly on $[a,b]$. Thus, the functions $u_{1}/f$ and $u_{2}/f$ are expended
into uniformly convergent generalized Taylor series, and from (\ref{u1u2}) we
have that
\[
\gamma_{k}(\frac{u_{1}}{f})(x_{0})=\frac{1+(-1)^{k}}{2}\lambda^{\frac{k}{2}}%
\]
and
\[
\gamma_{k}(\frac{u_{2}}{f})(x_{0})=\frac{1+(-1)^{k+1}}{2}\lambda^{\frac
{k-1}{2}}.
\]

\end{proof}

\begin{example}
Our first example is the equation (\ref{SL}) with a constant coefficient
$q=-c^{2}$, $c\in\mathbb{R}$, $c\neq0$. Obviously, a nonvanishing solution of
(\ref{SLhom}) can be chosen in the form $f(x)=e^{cx}$. Considering $x_{0}=0$
we calculate the first six rows of the matrix $A_{n}$. We have%
\[
A_{5}=\left(
\begin{tabular}
[c]{cccccc}%
$1$ & $0$ & $0$ & $0$ & $0$ & $0$\\
$0$ & $1$ & $0$ & $0$ & $0$ & $0$\\
$0$ & $-2c$ & $1$ & $0$ & $0$ & $0$\\
$0$ & $4c^{2}$ & $-2c$ & $1$ & $0$ & $0$\\
$0$ & $-8c^{3}$ & $4c^{2}$ & $-4c$ & $1$ & $0$\\
$0$ & $16c^{4}$ & $-8c^{3}$ & $12c^{2}$ & $-4c$ & $1$%
\end{tabular}
\ \ \right)  .
\]
On the other hand it is easy to see that the solution $u_{1}$ from
(\ref{u1u2}) in this case has the form
\[
u_{1}(x)=\frac{c+\kappa}{2\kappa}e^{\kappa x}+\frac{\kappa-c}{2\kappa
}e^{-\kappa x},
\]
where $\kappa=\sqrt{c^{2}+\lambda}$. Simple calculation involving the quotient
$u_{1}/f=\frac{c+\kappa}{2\kappa}e^{\left(  \kappa-c\right)  x}+\frac
{\kappa-c}{2\kappa}e^{-\left(  \kappa+c\right)  x}$ gives us the following
values
\[
\left(  u_{1}/f\right)  ^{\prime}(0)=0,\quad\left(  u_{1}/f\right)
^{\prime\prime}(0)=\lambda,\quad\left(  u_{1}/f\right)  ^{\prime\prime\prime
}(0)=-2c\lambda,
\]%
\[
\left(  u_{1}/f\right)  ^{IV}(0)=4c^{2}\lambda+\lambda^{2},\quad\left(
u_{1}/f\right)  ^{V}(0)=-8c^{3}\lambda-4c\lambda^{2}%
\]
which is exactly the result of multiplication of the matrix $A_{5}$ by the
vector
\[
(1,\ 0,\ \lambda,\ 0,\ \lambda^{2},\ 0)^{\top}%
\]
(see (\ref{for_u1})).
\end{example}

\begin{example}
Consider the equation
\begin{equation}
u^{\prime\prime}(x)-a^{2}x^{2k}u(x)=\lambda u(x). \label{ex2lambda}%
\end{equation}
This equation with $\lambda=-1$ can be found, e.g., in \cite[Part 3, Chapter
II, eq. 2.15]{Kamke} where it is stated that its solution is unknown,
meanwhile the solution of (\ref{ex2lambda}) when $\lambda=0$ is known
\cite[Part 3, Chapter II, eq. 2.14]{Kamke} even for noninteger $k$. In
particular, when $k=-\frac{2n}{2n+1}$ and $n$ is a negative integer number the
general solution of the equation
\begin{equation}
y^{\prime\prime}(x)-a^{2}x^{2k}y(x)=0 \label{ex2hom}%
\end{equation}
has the form
\[
y(x)=\left(  x^{\frac{2n-1}{2n+1}}\frac{d}{dx}\right)  ^{-n}\left(  c_{1}%
\exp\left(  a(2n+1)x^{\frac{1}{2n+1}}\right)  +c_{2}\exp\left(
-a(2n+1)x^{\frac{1}{2n+1}}\right)  \right)  .
\]
This solution can be used for solving (\ref{ex2lambda}). Consider, for
example, $k=-2$ ($n=-1$). The general solution of (\ref{ex2hom}) takes the
form
\[
y(x)=ax(c_{1}e^{-a/x}-c_{2}e^{a/x}).
\]
As a particular solution $f$ we can take, e.g., $f(x)=axe^{a/x}$. Fix
$x_{0}=1$. Calculation of the first five rows of the matrix $A_{n}$ gives us
the following result
\[
A_{4}=\left(
\begin{tabular}
[c]{ccccc}%
$1$ & $0$ & $0$ & $0$ & $0$\\
$0$ & $\frac{e^{-2a}}{a^{2}}$ & $0$ & $0$ & $0$\\
$0$ & $\frac{2(a-1)e^{-2a}}{a^{2}}$ & $1$ & $0$ & $0$\\
$0$ & $e^{-2a}\left(  4+\frac{6}{a^{2}}(1-2a)\right)  $ & $2(a-1)$ &
$\frac{e^{-2a}}{a^{2}}$ & $0$\\
$0$ & $e^{-2a}\left(  24-16a-\frac{24(1-a)}{a^{2}}\right)  $ & $8-16a+4a^{2}$
& $\frac{4(a-1)}{a^{2}}e^{-2a}$ & $1$%
\end{tabular}
\ \right)
\]
from where using theorem \ref{ThTaylor_foru1u2} the first four derivatives of
the functions $u_{1}/f$ and $u_{2}/f$ at $x_{0}=1$ can be calculated.
\end{example}

\section{Conclusions}

A formula for calculating Taylor coefficients of the solution of the
Sturm-Liouville equation is obtained. It is based on the knowledge of the
Taylor coefficients of a particular solution of the equation corresponding to
the zero value of a spectral parameter. The form of the (matrix) map
transforming one set of Taylor coefficients into the other is independent of
the particular form of the particular solution.

\section*{Acknowledgements}

Research of V.K. was supported by CONACYT, Mexico via the research project
50424. S.T. wants to thank the Department of Mathematics of the Cinvestav,
campus Queretaro for the hospitality during a part of his sabbatical year. The
research of S.T. is partly supported by grant from NSERC of Canada.

\end{document}